\newtheorem{thm}{Theorem}[section]
\newtheorem{lem}[thm]{Lemma}
\newtheorem{prop}[thm]{Proposition}
\newtheorem{rem}[thm]{Remark}
\theoremstyle{definition}
\numberwithin{equation}{section}
\newcommand{\R}{\mathbb R}
\newcommand{\e}{\varepsilon}
\newcommand{\p}{\partial}
\newcommand{\cof}{\mbox{cof}\,}
\newcommand{\dist}{\mbox{dist}\,}
\newcommand{\diam}{\mbox{diam}\,}
\newcommand{\comment}[1]{}
\def\h{\hspace*{.24in}}
\newenvironment{myindentpar}[1]%
{\begin{list}{}%
         {\setlength{\leftmargin}{#1}}%
         \item[]%
}
{\end{list}}
\begin{document} 
 \dedicatory{Dedicated to Professor John Urbas on the occasion of his 60th birthday}
\title[ Uniqueness for a system of Monge-Amp\`ere equations]{Uniqueness for a system of Monge-Amp\`ere equations 
}
\author{Nam Q. Le}
\address{Department of Mathematics, Indiana University, 831 E 3rd St,
Bloomington, IN 47405, USA}
\email{nqle@indiana.edu}
\thanks{The research of the author was supported in part by the National Science Foundation under grant DMS-1764248.}

\subjclass[2000]{35J96, 35A02, 47A75, 35J70}
\keywords{System of Monge-Amp\`ere equations, Uniqueness, Eigenvalue problem}

\begin{abstract}

In this note, we prove a uniqueness result, up to a positive multiplicative constant, for nontrivial convex solutions to a system of Monge-Amp\`ere equations 
 \begin{equation*}
\left\{
 \begin{alignedat}{2}
   \det D^2 u~& = \gamma |v|^p~&&\text{in} ~  \Omega, \\\
    \det D^2 v~& = \mu |u|^{n^2/p}~&&\text{in} ~  \Omega, \\\
u=v &= 0~&&\text{on}~ \p\Omega
 \end{alignedat} 
  \right.
  \end{equation*}
  on  bounded, smooth and uniformly convex domains $\Omega\subset \R^n$ provided that $p$ is close to $n\geq 2$. When $p=n$, we show that the uniqueness holds for
  general bounded convex domains $\Omega\subset \R^n$.

\end{abstract}
\maketitle

\section{Introduction and  statement of the main results}

In this note, we are interested in uniqueness issues for
the following system of Monge-Amp\`ere equations on a bounded open convex domain $\Omega \subset\R^n$ $(n\geq 2)$ with positive constants $p, \gamma, \mu$ and convex functions $u$ and $v$:

\begin{equation}
\label{MASp}
\left\{
 \begin{alignedat}{2}
   \det D^2 u~& = \gamma |v|^p~&&\text{in} ~  \Omega, \\\
    \det D^2 v~& = \mu |u|^{n^2/p}~&&\text{in} ~  \Omega, \\\
u=v &= 0~&&\text{on}~ \p\Omega.
 \end{alignedat} 
  \right.
  \end{equation}
When $\Omega$ is a bounded, smooth and uniformly convex domain, Zhang-Qi \cite[Theorem 1.5]{ZQ} show that (\ref{MASp}) has nontrivial convex solutions $u$ and $v$  if and only if $\gamma$ and $\mu$ satisfy
\begin{equation}
\label{cd1}
\gamma \mu^{p/n}= C(n, p,\Omega)
\end{equation}
for some positive constant $C(n,p,\Omega)$. Throughout, by solutions of the Monge-Amp\`ere equations, we always mean their convex solutions in the  sense of Aleksandrov; see \cite{Fi, G} for more details.

One can view (\ref{cd1}) as a sort of uniqueness result for the constants $\gamma$ and $\mu$. A particular corollary of  this analysis (see \cite[Corollary 1.6]{ZQ}) when $p=n$ is that the system of Monge-Amp\`ere equations
\begin{equation}\label{MAS}
\left\{
 \begin{alignedat}{2}
   \det D^2 u~& = \mu |v|^n~&&\text{in} ~  \Omega, \\\
    \det D^2 v~& = \mu |u|^n~&&\text{in} ~  \Omega, \\\
u=v &= 0~&&\text{on}~ \p\Omega
 \end{alignedat} 
  \right.
  \end{equation}
with $\mu>0$ has nontrivial convex solutions $u$ and $v$ on a bounded, smooth and uniformly convex domain $\Omega$ if and only if $\mu$ is the Monge-Amp\`ere eigenvalue of the domain $\Omega$. 

One crucial point in Zhang-Qi's proof of their Theorem 1.5 in \cite{ZQ} is the global Lipschitz regularity for solutions to the Monge-Amp\`ere equations on smooth and uniformly convex domains with globally continuous right hand side and zero boundary data.  With this global regularity, Zhang and Qi were able to apply the boundary Hopf lemma in their fixed point argument using decoupling technique to verify the conditions of a generalized Krein-Rutman theorem developed in Jacobsen \cite{J}, thereby obtaining the existence of solutions to (\ref{MASp}).

An interesting question that was left open in the analysis of \cite{ZQ} is the uniqueness of nontrivial convex solutions $u$ and $v$ to (\ref{MASp}) when $\gamma$ and $\mu$ satisfy (\ref{cd1}). Here, uniqueness should be interpreted as up to a positive multiplicative  constant, for if $u$ and $v$ solve (\ref{MASp}) then
$\tau^{p/n} u$ and $\tau v$ also solve (\ref{MASp}) for any positive constant $\tau>0$. This question is motivated by the following uniqueness results for Monge-Amp\`ere equations:
\begin{myindentpar}{1cm}
(1) The single equation analogue of (\ref{MAS}), that is the Monge-Amp\`ere  eigenvalue problem, has uniqueness of solutions. This was shown by Lions \cite{Ls} for smooth and uniformly convex domains and by the author \cite{L} for general bounded convex domains.\\
(2)  The single equation analogue of (\ref{MASp}), that is the degenerate Monge-Amp\`ere  equation for $0<p\neq n$
\begin{equation*}
\left\{
 \begin{alignedat}{2}
   \det D^2 u~& = |u|^p~&&\text{in} ~  \Omega, \\\
u&= 0~&&\text{on}~ \p\Omega,
 \end{alignedat} 
  \right.
  \end{equation*}
  also has uniqueness of nontrivial solutions when $p<n + \e(n)$ for some small $\e(n)>0$. For $0<p<n$, the uniqueness was obtained by Tso \cite{Tso} while for $n<p<n+\e(n)$, the uniqueness was obtained recently by Huang \cite{H}.
\end{myindentpar}

In \cite{L}, the author proved the existence, uniqueness and variational characterization of the Monge-Amp\`ere eigenvalue, and uniqueness of  convex Monge-Amp\`ere eigenfunctions on general bounded convex domains $\Omega\subset\R^n$.
 These results are 
the singular counterpart of those obtained by  Lions \cite{Ls} and Tso \cite{Tso} in the smooth, uniformly convex setting. 
For convenience, we recall part of  \cite[Theorem 1.1]{L} here.

\begin{thm} 
\label{ev_thm}
 Let $\Omega$ be a bounded open convex domain in $\R^n$. Define $\lambda=\lambda[\Omega]$ by
\begin{equation}
 \lambda[\Omega] =\inf\left\{ \frac{\int_{\Omega} |w|\det D^2 w~ dx }{\int_{\Omega}|w|^{n+1}~ dx}: w\in C(\overline{\Omega}),
 ~w~\text{is convex, nonzero in } \Omega,~ w=0~\text{on}~\p\Omega\right\}.
\label{lam_def}
 \end{equation}
 Then, 
 \begin{myindentpar}{1cm}
 (i) There exists a 
 nonzero convex solution $w\in C^{0,\beta}(\overline{\Omega})\cap C^{\infty}(\Omega)$ for all $\beta\in (0, 1)$ to the Monge-Amp\`ere eigenvalue problem
 \begin{equation}
 \left\{
 \begin{alignedat}{2}
   \det D^{2} w~&=\lambda |w|^{n} \h~&&\text{in} ~\Omega, \\\
w &=0\h~&&\text{on}~\p \Omega.
 \end{alignedat}
 \right.
 \label{EVP_eq}
\end{equation}
Thus the infimum in (\ref{lam_def}) is achieved. The constant $\lambda[\Omega]$ is called the Monge-Amp\`ere eigenvalue of $\Omega$ and $w$ is called a Monge-Amp\`ere eigenfunction of $\Omega$.\\
(ii) The eigenvalue-eigenfunction pair $(\lambda, w)$ to (\ref{EVP_eq}) is unique in the following sense: If the pair $(\Lambda, \tilde w)$ 
satisfies $\det D^2 \tilde w =\Lambda |\tilde w|^n$ in $\Omega$ where $\Lambda>0$ is a positive constant and 
$\tilde w\in C(\overline{\Omega})$ is convex, nonzero
with $\tilde w=0$ on $\p\Omega$, then $\Lambda=\lambda$ and $\tilde w=m w$ for some positive constant $m$.\\
\end{myindentpar}
\end{thm}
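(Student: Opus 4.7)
My approach for Theorem~\ref{ev_thm} divides cleanly along parts (i) and (ii). For existence I attack the Rayleigh-type variational problem in (\ref{lam_def}) directly; for uniqueness I exploit the $n$-homogeneity of the equation together with a linearised strong maximum principle.

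\textbf{Existence.} First I verify $\lambda[\Omega]>0$ by invoking Aleksandrov's maximum principle, which gives $|w(x)|^n \leq C_n\,\diam(\Omega)^{n-1}\,\dist(x,\p\Omega)\int_\Omega \det D^2 w$ for any convex $w$ vanishing on $\p\Omega$; integrating against $|w|$ yields $\int_\Omega |w|^{n+1} \leq C(\Omega)\int_\Omega |w|\det D^2 w$ and hence a positive lower bound on the quotient. To produce a minimizer I take a minimizing sequence $\{w_k\}$ normalized by $\int_\Omega |w_k|^{n+1}=1$. The Aleksandrov bound gives $\|w_k\|_{L^\infty(\Omega)} \leq C$, and Blaschke-type compactness for convex functions extracts a subsequence converging locally uniformly in $\Omega$ to some convex $w \leq 0$. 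Weak continuity of Monge-Amp\`ere measures under local uniform convergence of convex functions lets me pass to the limit in both the quotient and the Euler-Lagrange equation, producing a nontrivial $w\in C(\overline\Omega)$ with $\det D^2 w = \lambda|w|^n$ in the Aleksandrov sense. Interior $C^\infty$ regularity follows from Caffarelli's strict convexity and $C^{1,\alpha}$ theory on positive sections of $w$, bootstrapped via Schauder and Evans-Krylov. Global $C^{0,\beta}$ continuity is a direct consequence of Aleksandrov's estimate combined with $w=0$ on $\p\Omega$.

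\textbf{Uniqueness.} Suppose $(\Lambda,\tilde w)$ is another nontrivial solution. Inserting $\tilde w$ into (\ref{lam_def}) gives $\Lambda \geq \lambda$. For the reverse inequality I define $c_0 := \inf\{c>0 : c|\tilde w|\geq |w|\text{ in }\Omega\}$. Since both functions are continuous, vanish on $\p\Omega$, and are strictly negative in $\Omega$, standard convex-geometry arguments show $c_0$ is finite and positive, with $w - c_0\tilde w \geq 0$ in $\Omega$ and equality at some interior $x_0\in\Omega$. At this interior minimum $D^2 w \geq c_0 D^2\tilde w$ as positive semidefinite matrices, so $\det D^2 w(x_0) \geq c_0^n\det D^2\tilde w(x_0)$; substituting the two equations yields $\lambda|w(x_0)|^n \geq c_0^n\Lambda|\tilde w(x_0)|^n = \Lambda|w(x_0)|^n$, forcing $\Lambda\leq\lambda$ and hence $\Lambda=\lambda$. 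To promote this pointwise equality to a global identity I set $\phi := w - c_0\tilde w \geq 0$ and linearise along the segment $w_t := tw + (1-t)c_0\tilde w$: on each compactly contained smooth section $\phi$ solves $A^{ij}D_{ij}\phi = -\lambda n B\,\phi$, where $A^{ij} := \int_0^1 \cof(D^2 w_t)^{ij}\,dt$ is uniformly positive definite (the cofactor matrix of a positive definite matrix is positive definite) and $B := \int_0^1 |w_t|^{n-1}\,dt$ is bounded. The strong maximum principle applied to $\phi\geq 0$ with interior zero at $x_0$ forces $\phi\equiv 0$ on a neighbourhood; a standard connectedness/propagation argument through nested sections then extends this throughout $\Omega$, giving $\tilde w = m w$ with $m := 1/c_0 > 0$.

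\textbf{Main obstacle.} The principal difficulty is that $\Omega$ is only assumed bounded and convex, with no smoothness or strict convexity of $\p\Omega$, so the boundary Hopf lemmas used by Lions~\cite{Ls} in the smooth, uniformly convex setting are unavailable and the eigenfunctions may only be H\"older up to $\p\Omega$. The touching argument in (ii) must therefore be confined to the interior, and one must carefully justify that $c_0$ is strictly positive and attained at an interior point by quantifying the vanishing rates of $w$ and $\tilde w$ near $\p\Omega$ and comparing them on compact subsets. Establishing this uniform two-sided control between $w$ and $\tilde w$, together with rigorously propagating the equality $\phi\equiv 0$ across sections of possibly irregular geometry, is where I anticipate the bulk of the technical work.
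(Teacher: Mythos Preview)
This paper does not prove Theorem~\ref{ev_thm}; it is quoted verbatim from \cite{L} (``we recall part of \cite[Theorem 1.1]{L} here''), so there is no in-paper proof to compare against. However, the central uniqueness device of \cite{L} is restated here as Proposition~\ref{NIBP} and deployed in the proof of Theorem~\ref{uni_thm}, so a meaningful comparison is still possible.

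Your uniqueness argument has a genuine gap, and it is exactly the one you flag as the ``main obstacle'' without resolving. You need $c_0=\sup_\Omega |w|/|\tilde w|$ to be finite and attained at an interior point. Convexity gives the lower bound $|\tilde w(x)|\geq c\,\dist(x,\partial\Omega)$, but for a merely bounded convex domain the best upper bound available is $|w(x)|\leq C\,\dist(x,\partial\Omega)^\beta$ for $\beta<1$ (this is precisely the $C^{0,\beta}$ regularity in part (i)). Hence the ratio $|w|/|\tilde w|$ can blow up like $\dist^{\beta-1}$ near $\partial\Omega$, and the touching point you need may fail to exist in the interior. ``Quantifying the vanishing rates'' does not close this, because without smoothness of $\partial\Omega$ the two eigenfunctions can genuinely vanish at different rates; this is exactly why Lions's Hopf-lemma argument in \cite{Ls} is confined to smooth uniformly convex domains.

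The route taken in \cite{L}, visible in this paper through Proposition~\ref{NIBP} and the proof of Theorem~\ref{uni_thm}, avoids pointwise comparison altogether. The inequality $\Lambda\geq\lambda$ comes from the variational characterisation, as you say. For $\lambda\geq\Lambda$ one applies the nonlinear integration by parts (\ref{term0}) with the pair $(\tilde w, w)$: substituting $\det D^2 w=\lambda|w|^n$ and $\det D^2\tilde w=\Lambda|\tilde w|^n$ into (\ref{term0}) yields $\lambda\int_\Omega|\tilde w||w|^n\,dx \geq \Lambda^{1/n}\lambda^{(n-1)/n}\int_\Omega|\tilde w||w|^n\,dx$, hence $\lambda\geq\Lambda$. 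Equality then forces the equality case of the underlying matrix inequality, giving $D^2\tilde w = c(x)D^2 w$ and ultimately $\tilde w = m w$. This integral argument needs only the mild bound (\ref{detuv}), which follows from the global H\"older estimate, and never requires control of the boundary ratio $|w|/|\tilde w|$. Your existence outline is closer in spirit to \cite{L}, though there too the actual argument proceeds by approximation from smooth uniformly convex subdomains (using Lions's result) rather than by direct minimisation, since extracting the Euler--Lagrange equation under the convexity constraint is delicate.
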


Our main results regarding the uniqueness of solutions to (\ref{MASp}) are the following.
\begin{thm}
\label{uni_thmp}
Let $\Omega$ be a bounded, open, smooth and uniformly convex domain in $\R^n$.   Then, provided $|p-n|$ is small,  nontrivial convex solutions $u$ and $v$ to (\ref{MASp}) are unique in the following sense:  if $\hat u$ and $\hat v$ are other nontrivial convex solutions to (\ref{MASp}) then there is a positive constant $\tau>0$ such that
 $\hat u= \tau^{p/n} u$ and $\hat v=\tau v$.
\end{thm}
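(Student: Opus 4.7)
\emph{Plan for proof of Theorem~\ref{uni_thmp}.}
The plan is to first handle the case $p=n$ on any bounded convex domain (this gives the second assertion stated in the abstract), and then to upgrade to a neighborhood of $n$ by a compactness/linearization contradiction argument whose limiting input is the $p=n$ uniqueness.

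For $p=n$, the rescaling $(u,v)\mapsto (au,bv)$ multiplies $(\gamma,\mu)$ by $(b^n/a^n,\,a^n/b^n)$ while preserving $\gamma\mu$, so I may assume $\gamma=\mu$; by (\ref{cd1}) this common value equals $\lambda[\Omega]$. Multiplying the first equation of (\ref{MASp}) by $|u|$ and the second by $|v|$, integrating, and applying H\"older's inequality gives
\[
\Bigl(\int_\Omega |u|\det D^2 u\Bigr)\Bigl(\int_\Omega |v|\det D^2 v\Bigr)\le \gamma\mu\int_\Omega|u|^{n+1}\int_\Omega|v|^{n+1},
\]
while the variational characterization in Theorem~\ref{ev_thm} bounds each factor on the left from below by $\lambda[\Omega]\int|u|^{n+1}$, resp.\ $\lambda[\Omega]\int|v|^{n+1}$. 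Hence $\gamma\mu\ge\lambda[\Omega]^2$, with equality in the present case; this forces equality in H\"older and therefore $u=c\,v$ for some positive constant $c$. Substitution identifies $v$ as a solution of (\ref{EVP_eq}), and Theorem~\ref{ev_thm}(ii) yields uniqueness up to the one-parameter scaling $(u,v)\mapsto(\tau u,\tau v)$.

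For $|p-n|$ small I argue by contradiction. Suppose $p_k\to n$ and two pairs of nontrivial solutions $(u_k,v_k),(\hat u_k,\hat v_k)$ of (\ref{MASp}) with $p=p_k$ are unrelated by the scaling $(u,v)\mapsto(\tau^{p_k/n}u,\tau v)$. Using this scaling I replace $(u_k,v_k)$ by the representative minimizing its $L^2$-distance to $(\hat u_k,\hat v_k)$, so the difference $(\delta_k,\eta_k)$ is $L^2$-orthogonal to the tangent $((p_k/n)u_k,v_k)$ of the scaling orbit. The global $C^{0,\beta}(\overline\Omega)$-regularity underlying \cite{ZQ}, together with the $p=n$ uniqueness already proved, then forces subsequential uniform convergence of both pairs to a common limit $(w,w)$ with $w$ an eigenfunction of $\Omega$. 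After dividing the difference by its $L^\infty$-size $\e_k\to 0$ and integrating $\det D^2$ along the segment joining the two solutions, I pass to the limit to obtain the linear system for $(\tilde u,\tilde v)$,
\[
\cof(D^2w)^{ij}D_{ij}\tilde u+n\lambda[\Omega]|w|^{n-1}\tilde v=0,\qquad \cof(D^2w)^{ij}D_{ij}\tilde v+n\lambda[\Omega]|w|^{n-1}\tilde u=0
\]
in $\Omega$ with $\tilde u=\tilde v=0$ on $\p\Omega$. Setting $\alpha=\tilde u+\tilde v$, $\beta=\tilde u-\tilde v$ decouples the system: $\alpha$ is a principal eigenfunction of the weighted linearized Monge-Amp\`ere operator at $w$ with eigenvalue $n\lambda[\Omega]$ (hence proportional to $w$ by simplicity), while $\beta$ would correspond to the ``negative eigenvalue'' $-n\lambda[\Omega]$ of the same positive operator and must therefore vanish. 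Thus $(\tilde u,\tilde v)\in\mathbb R\,(w,w)$, and the $L^2$-orthogonality inherited from the minimization step kills this one-dimensional contribution, forcing $(\tilde u,\tilde v)\equiv 0$ and contradicting the normalization $\|(\tilde u_k,\tilde v_k)\|_{L^\infty}=1$.

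The hardest step is the kernel analysis: I must establish that $n\lambda[\Omega]$ is a simple eigenvalue of the weighted linearized operator $-\cof(D^2 w)^{ij}D_{ij}/|w|^{n-1}$ acting on functions vanishing on $\p\Omega$ — this is essentially the infinitesimal content of Theorem~\ref{ev_thm}(ii) — and that $-n\lambda[\Omega]$ is not an eigenvalue. Since both $\cof(D^2 w)$ and the weight $|w|^{n-1}$ degenerate on $\p\Omega$, the standard linear elliptic theory does not apply directly; one has to combine the global Lipschitz regularity of $w$ on smooth, uniformly convex $\Omega$, a boundary Hopf-type lemma, and a second-variation argument at the minimizer of (\ref{lam_def}). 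A subsidiary technical point is producing uniform $C^{0,\alpha}$ estimates on the averaged linear operators as $p_k\to n$ so that the passage to the limit of $(\tilde u_k,\tilde v_k)$ in $C^0(\overline\Omega)$ is rigorous. These are precisely the places where the smoothness and uniform convexity of $\Omega$ enter essentially.
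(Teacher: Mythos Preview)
Your overall architecture---contradiction, linearization along a sequence $p_k\to n$, and identification of the limiting kernel---matches the paper's, but the two routes diverge in both the $p=n$ step and the endgame.

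For $p=n$ you use H\"older on the products $\int|u||v|^n$, $\int|v||u|^n$ together with the variational characterization to obtain $\gamma\mu\ge\lambda^2$, and then you invoke (\ref{cd1}) (equivalently, the existence of the eigenpair $(w,w)$) to force equality. This is clean but it leans on Zhang--Qi's (\ref{cd1}), hence only covers smooth, uniformly convex $\Omega$; your parenthetical claim that it handles the general-domain assertion of the abstract is not justified. The paper instead proves $\mu\ge\lambda$ via the elementary inequality $|u|^{n+1}+|v|^{n+1}\ge |u||v|^n+|v||u|^n$ and $\mu\le\lambda$ via a nonlinear integration by parts (Proposition~\ref{NIBP}) combined with the Minkowski inequality $(\det D^2(u+v))^{1/n}\ge(\det D^2u)^{1/n}+(\det D^2v)^{1/n}$. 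That argument is self-contained and works on arbitrary bounded convex domains.

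For $|p-n|$ small, you normalize by $L^2$-minimizing over the scaling orbit so that the rescaled difference is $L^2$-orthogonal to $(w,w)$; after decoupling $\alpha=\tilde u+\tilde v$, $\beta=\tilde u-\tilde v$, you conclude $\beta=0$ (good-sign zero-order term) and $\alpha\in\R w$ (simplicity), whence orthogonality forces the limit to vanish, contradicting $\|\cdot\|_{L^\infty}=1$. The paper instead normalizes $\|v_k\|_{L^\infty}=\|\tilde v_k\|_{L^\infty}=1$ and $\|u_k\|_{L^\infty}\ge\|\tilde u_k\|_{L^\infty}$, normalizes $\phi_k$ and $\varphi_k$ separately (which forces an extra step showing the ratio $\tau$ of their sizes is positive), and then shows the limits equal $|w|=-w$ rather than $0$. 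The contradiction is extracted not from a norm collapse but from a sign: a barrier $\eta=M\delta^n\psi-\delta w$ in a boundary collar shows $\phi_k>0$, $\varphi_k>0$ in $\Omega$ for large $k$, which is incompatible with $\|v_k\|_{L^\infty}=\|\tilde v_k\|_{L^\infty}$.

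The technical burdens you flag---simplicity of $n\lambda$ for the degenerate operator $-W^{ij}D_{ij}/|w|^{n-1}$ and uniform boundary estimates for the linearized problems---are exactly what the paper resolves with concrete tools: global $C^{2,\beta}$ regularity for the system (Theorem~\ref{C2thm}) feeds a comparison of $\phi_k$ with multiples of $\psi$ to get $|\phi_k|\le C\dist(\cdot,\p\Omega)$, and simplicity is obtained not by abstract spectral theory but by comparing the positive supersolutions $M(-w)-\phi$ and $-w$ via \cite[Proposition~A.2]{Ls}. Your joint $L^\infty$ normalization of $(\tilde u_k,\tilde v_k)$ is arguably tidier (it sidesteps the $\tau>0$ step), but you will still need the same $C^{2,\beta}$ input to get enough compactness to pass to the limit in $C^0(\overline\Omega)$.
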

When $p=n$, we show that the uniqueness holds for
  general bounded convex domains $\Omega\subset \R^n$.

\begin{thm}
\label{uni_thm}
 Let $\Omega$ be a bounded open convex domain in $\R^n$.
Assume that $\mu>0$ and nontrivial convex functions $u$ and $v$ satisfy (\ref{MAS}). Then $\mu$ must be the Monge-Amp\`ere eigenvalue of the domain $\Omega$, $u=v$ and $u$ must be a Monge-Amp\`ere eigenfunction of $\Omega$.
\end{thm}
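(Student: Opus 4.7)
The plan is to use the variational characterization of $\lambda[\Omega]$ from Theorem \ref{ev_thm}, together with H\"older's inequality and Minkowski's determinant inequality, in order to reduce (\ref{MAS}) to the single-function eigenvalue equation (\ref{EVP_eq}). The argument has three parts: an upper bound $\lambda\le\mu$, a lower bound $\mu\le\lambda$, and an equality analysis forcing $u=v$.

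For the upper bound, since $u$ is a nontrivial convex function vanishing on $\partial\Omega$, it is admissible in the Rayleigh quotient (\ref{lam_def}). Substituting $\det D^2 u=\mu|v|^n$ and applying H\"older's inequality gives
\[
\lambda\;\le\;\frac{\mu\int_\Omega |u|\,|v|^n\,dx}{\int_\Omega |u|^{n+1}\,dx}\;\le\;\mu\left(\frac{\|v\|_{L^{n+1}(\Omega)}}{\|u\|_{L^{n+1}(\Omega)}}\right)^{\!n}.
\]
The symmetric estimate with the roles of $u$ and $v$ exchanged, multiplied with the above, yields $\lambda^2\le\mu^2$, i.e., $\lambda\le\mu$.

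For the lower bound, set $s=u+v$, which is convex with $s=0$ on $\partial\Omega$. Since $D^2u$ and $D^2v$ are positive semi-definite a.e., Minkowski's determinant inequality together with (\ref{MAS}) gives
\[
(\det D^2 s)^{1/n}\;\ge\;(\det D^2 u)^{1/n}+(\det D^2 v)^{1/n}\;=\;\mu^{1/n}(|u|+|v|)\;=\;\mu^{1/n}|s|,
\]
so $\det D^2 s\ge\mu|s|^n$ in $\Omega$ in the Aleksandrov sense. Writing $w_0$ for the Monge-Amp\`ere eigenfunction from Theorem \ref{ev_thm} and setting $\tau^*:=\inf_\Omega(s/w_0)>0$, the function $\tau^* w_0-s$ is non-negative on $\Omega$ and vanishes at some touching point $x_0$; comparing Hessians at an interior touching point yields
\[
\lambda\,|s(x_0)|^n\;=\;\det D^2(\tau^*w_0)(x_0)\;\ge\;\det D^2 s(x_0)\;\ge\;\mu\,|s(x_0)|^n,
\]
hence $\mu\le\lambda$.

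Combining the two bounds, $\mu=\lambda$. Tracing the equality case through the upper bound forces $\|u\|_{L^{n+1}(\Omega)}=\|v\|_{L^{n+1}(\Omega)}$ and saturation in H\"older's inequality, so $|u|=c\,|v|$ a.e.\ in $\Omega$ for some constant $c>0$; substituting $u=cv$ into (\ref{MAS}) then yields $c^{2n}=1$, and therefore $u=v$. The system collapses to $\det D^2 u=\mu|u|^n$ with $u=0$ on $\partial\Omega$, so the uniqueness part of Theorem \ref{ev_thm} identifies $\mu$ with $\lambda[\Omega]$ and $u$ with a Monge-Amp\`ere eigenfunction of $\Omega$. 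The main obstacle is the sliding step on a general bounded convex domain: the infimum $\tau^*$ may be attained only as $x$ approaches $\partial\Omega$, and $s,w_0$ are only H\"older continuous up to the boundary, so the touching analysis must be supplemented with a boundary comparison in the spirit of the one developed in \cite{L} for the single-equation eigenfunction uniqueness.
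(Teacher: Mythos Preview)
Your upper bound $\lambda\le\mu$ is correct and essentially equivalent to the paper's Step~1: the paper tests the Rayleigh quotient simultaneously with $u$ and $v$ and uses the pointwise inequality $|u|^{n+1}+|v|^{n+1}\ge |u|\,|v|^n+|v|\,|u|^n$, while you test with $u$ and $v$ separately and combine via H\"older; the content is the same, and your equality analysis (H\"older saturation forcing $|u|=c|v|$, then $c^{2n}=1$) is fine.

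The genuine gap is exactly the one you flag in the lower bound. On a general bounded convex domain the sliding argument does not go through: you have no control on the boundary behaviour of the ratio $s/w_0$, so you cannot assert $\tau^*>0$, nor that the infimum is attained at an interior point where a Hessian comparison is available. Both $s$ and $w_0$ are only $C^{0,\beta}(\overline{\Omega})$ here; there is no Hopf lemma to fall back on. The paper avoids this entirely by replacing the pointwise touching step with an integral comparison. Having obtained $(\det D^2 s)^{1/n}\ge\mu^{1/n}|s|$ from Minkowski's inequality exactly as you do, the paper applies the \emph{nonlinear integration by parts} of \cite{L} (recorded as Proposition~\ref{NIBP}) to $s=u+v$ and the eigenfunction $w$, obtaining
\[
\lambda\int_\Omega |s|\,|w|^n\,dx
=\int_\Omega |s|\det D^2 w\,dx
\;\ge\;\int_\Omega |w|\,(\det D^2 s)^{1/n}(\det D^2 w)^{(n-1)/n}\,dx
\;\ge\; \mu^{1/n}\lambda^{(n-1)/n}\int_\Omega |s|\,|w|^n\,dx,
\]
whence $\lambda\ge\mu$. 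This is precisely the ``boundary comparison in the spirit of \cite{L}'' you allude to; Proposition~\ref{NIBP} is the missing lemma, and once you invoke it (after checking its hypothesis (\ref{detuv}), which the paper does via the estimate (\ref{check_IBP})) your outline becomes a complete proof.
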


\begin{rem}
From Proposition \ref{uvathm}, we obtain the existence of nontrivial convex solutions to (\ref{MASp}) with a suitable constants $\gamma>0$ and $\mu>0$ when the domain $\Omega$ is only assumed to be bounded and convex. It would be interesting to prove the uniqueness of solutions to (\ref{MASp}) in this nonsmooth setting when $p\neq n$.
\end{rem}
\begin{rem}
\label{gma_rem}
By considering
$$\bar u:= \gamma^{-\frac{1}{n}} C^{\frac{1}{n+ p}} \|v\|^{-p/n}_{L^{\infty}(\Omega)}u,~\bar v:= \|v\|^{-1}_{L^{\infty}(\Omega)}v,~ \sigma:= C^{\frac{n}{n+ p}}(n,p,\Omega),$$
if necessary,
we can assume in the system (\ref{MASp}) that $$\gamma=\mu= \sigma~\text{and }
\|v\|_{L^{\infty}(\Omega)}=1.$$
\end{rem}

We will use this remark throughout this note. Moreover, we will also use the fact that nontrivial convex solutions to (\ref{MASp}) or to (\ref{MAS}) are strictly convex and $C^{\infty}(\Omega)$ on any bounded convex domain $\Omega$; see, for example \cite[Proposition 2.8]{L} for a proof.

We now indicate some ingredients in the proofs of our main results. For Theorem \ref{uni_thm}, we will use the variational characterization of the Monge-Amp\`ere eigenvalue in Theorem \ref{ev_thm} together with a nonlinear integration by parts in \cite{L} which we will recall in Proposition \ref{NIBP}. We will prove Theorem \ref{uni_thmp} by using a contradiction argument and the uniqueness result for the limiting case of $p=n$ in Theorem \ref{uni_thm}. A critical ingredient in this argument will be the global $C^{2,\beta}$ regularity for solutions to (\ref{MASp}). We will establish this result in Theorem \ref{C2thm}.

The rest of the note is organized as follows. In Section \ref{C2sec}, we will establish uniform estimates and global $C^{2,\alpha}$ regularity for solutions to (\ref{MASp}). In Section \ref{nsec}, we will prove Theorem \ref{uni_thm}. The proof of Theorem \ref{uni_thmp} will be given in Section \ref{psec}.

\section{Uniform estimates and global $C^{2,\alpha}$ regularity }
\label{C2sec}
In this section, we establish uniform estimates and global $C^{2,\alpha}$ regularity for solutions to (\ref{MASp}).  For convenience, by using Remark \ref{gma_rem}, we can assume that $$\gamma=\mu=\sigma>0.$$
We start with the following uniform estimates.
\begin{lem}
\label{suplem}
Let $\Omega$ be a bounded open convex domain in $\R^n$ ($n\geq 2$). Let $p>0$.
Assume that $\sigma>0$ and nontrivial convex functions $u$ and $v$ solve the following system of Monge-Amp\`ere equations: 
\begin{equation}
\label{Sa1}
\left\{
 \begin{alignedat}{2}
   \det D^2 u~& = \sigma |v|^p~&&\text{in} ~  \Omega, \\\
    \det D^2 v~& = \sigma |u|^{n^2/p}~&&\text{in} ~  \Omega, \\\
u=v &= 0~&&\text{on}~ \p\Omega.
 \end{alignedat} 
  \right.
  \end{equation}
  Then there exists a positive constant $C(n,p)>0$ such that
\begin{equation}
\label{unp}
C^{-1}(n, p)|\Omega|^{-2} \leq \sigma \leq C(n, p)|\Omega|^{-2}, C^{-1}(n, p) \|v\|_{L^{\infty}(\Omega)} \leq  \|u\|^{\frac{n}{p}}_{L^{\infty}(\Omega)} \leq C(n, p) \|v\|_{L^{\infty}(\Omega)}.
\end{equation}
\end{lem}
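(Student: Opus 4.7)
The plan is to combine Aleksandrov's maximum principle, the Brunn--Minkowski inequality on sublevel sets of convex functions, and an affine normalization of $\Omega$ via John's theorem. The lower bound on $\sigma$ together with the two-sided ratio bound will be relatively direct; the matching upper bound on $\sigma$ is the subtle step.

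First I would apply Aleksandrov's maximum principle to each equation. Since $u$ and $v$ are convex with zero boundary data,
\[
\|u\|_{L^{\infty}(\Omega)}^{n}\leq C_n\,\diam(\Omega)^{n}\int_{\Omega}\det D^{2}u\,dx = C_n\,\diam(\Omega)^{n}\,\sigma\int_{\Omega}|v|^{p}\,dx \leq C_n\,\diam(\Omega)^{n}\,|\Omega|\,\sigma\,\|v\|_{L^{\infty}(\Omega)}^{p},
\]
and symmetrically $\|v\|_{L^{\infty}(\Omega)}^{n}\leq C_n\,\diam(\Omega)^{n}\,|\Omega|\,\sigma\,\|u\|_{L^{\infty}(\Omega)}^{n^{2}/p}$. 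Writing $R:=C_n\,\diam(\Omega)^{n}\,|\Omega|\,\sigma$, these become $\|u\|_\infty^n\leq R\|v\|_\infty^p$ and $\|v\|_\infty^n\leq R\|u\|_\infty^{n^2/p}$. Substituting the first into the second forces $\|v\|_\infty^n\leq R^{1+n/p}\|v\|_\infty^n$, whence $R\geq 1$, and the two inequalities together give the two-sided ratio bound $R^{-1/n}\leq \|u\|_\infty^{n/p}/\|v\|_\infty\leq R^{1/p}$.

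Next I would normalize the domain. The quantity $\sigma|\Omega|^{2}$ and the ratio $\|u\|_\infty^{n/p}/\|v\|_\infty$ are invariant under linear transformations of $\R^n$: under $T$ with $|\det T|=d$, $\sigma\mapsto d^{2}\sigma$ and $|\Omega|\mapsto|\Omega|/d$, while the $L^\infty$-norms of $u,v$ are unchanged. By John's theorem one may therefore reduce to the case $B_1\subseteq\Omega\subseteq n B_1$, in which $\diam(\Omega)^n|\Omega|\asymp 1$; the inequality $R\geq 1$ then translates into $\sigma\geq c(n,p)|\Omega|^{-2}$.

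The complementary upper bound $\sigma\leq C(n,p)|\Omega|^{-2}$, equivalently $R\leq C(n,p)$ in normalized coordinates, is the subtler half, since Aleksandrov's maximum principle yields only upper bounds on $L^\infty$-norms in terms of Monge--Amp\`ere masses and not the reverse direction. To close this step, I expect to use the Brunn--Minkowski-type lower bound
\[
\int_{\Omega}|w|^{q}\,dx\geq c(n,q)\,\|w\|_{L^{\infty}(\Omega)}^{q}\,|\Omega|
\]
valid for any convex $w\leq 0$ with $w=0$ on $\partial\Omega$ (which follows from the concavity of $t\mapsto|\{w\leq -t\}|^{1/n}$ applied via Brunn--Minkowski), combined with a domain-monotonicity/comparison argument: since $B_1\subseteq\Omega$ after John normalization, a comparison with the explicit radial solution on the unit ball (whose coupled eigenvalue can be computed by separation of variables) should yield $\sigma(\Omega)\leq\sigma(B_1)$, a constant depending only on $n$ and $p$, and combined with $|\Omega|\asymp 1$ this gives the desired upper bound. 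The principal obstacle is establishing this comparison for the coupled Monge--Amp\`ere system in a way that is robust under general convex (possibly nonsmooth) domains.
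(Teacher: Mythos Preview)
Your lower bound on $\sigma$ via Aleksandrov's maximum principle and John normalization is correct and matches the paper in spirit. The gap is in the upper bound: the domain-monotonicity argument $\sigma(\Omega)\leq\sigma(B_1)$ you propose is neither available nor needed for this coupled system, and you yourself flag it as the principal obstacle. Without an upper bound on $R$, your ratio inequalities $R^{-1/n}\leq \|u\|_{L^\infty(\Omega)}^{n/p}/\|v\|_{L^\infty(\Omega)}\leq R^{1/p}$ are also vacuous, since the interval they describe can be arbitrarily large.

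The paper closes the upper bound directly, pairing the Brunn--Minkowski lower bound you already wrote down with an elementary \emph{upper} bound on the Monge--Amp\`ere mass of an interior set. After John normalization $B_R\subset\Omega\subset B_{nR}$, the gradient estimate $|Du(x)|\leq \|u\|_{L^\infty(\Omega)}/\dist(x,\partial\Omega)$ for a convex function vanishing on $\partial\Omega$ bounds the image $\partial u(B_{R/2})$ inside a ball of radius $C\|u\|_{L^\infty(\Omega)}/R$, hence
\[
\int_{B_{R/2}}\det D^2 u\,dx \leq c(n)\,\|u\|_{L^\infty(\Omega)}^n\,|\Omega|^{-1}.
\]
Integrating the first equation over $B_{R/2}$ and combining this with the lower bound $\int_{B_{R/2}}|v|^p\,dx\geq c(n,p)\|v\|_{L^\infty(\Omega)}^p\,|\Omega|$ (your Brunn--Minkowski inequality, restricted to the interior ball) yields
\[
\sigma\,c(n,p)\,\|v\|_{L^\infty(\Omega)}^p\,|\Omega| \;\leq\; c(n)\,\|u\|_{L^\infty(\Omega)}^n\,|\Omega|^{-1},
\]
which is precisely the reverse of your Aleksandrov inequality. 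Doing the same for the second equation and combining the two resulting two-sided chains forces $\sigma|\Omega|^2\asymp 1$ and $\|u\|_{L^\infty(\Omega)}^{n/p}\asymp\|v\|_{L^\infty(\Omega)}$ with constants depending only on $n$ and $p$. So the missing idea is not a comparison with the ball's eigenvalue, but the interior Monge--Amp\`ere mass bound coming from the Lipschitz estimate for convex functions; once you insert that, the argument is complete and essentially coincides with the paper's.
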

\begin{proof}[Proof of Lemma \ref{suplem}]
Under the unimodular affine transformations $T:\R^n\rightarrow\R^n$ with $\det T=1$:
 $$\Omega\rightarrow T(\Omega),~u(x)\rightarrow u(T^{-1}x), v(x)\rightarrow v(T^{-1}x)$$
 the system (\ref{Sa1}), the quantities 
 $\sigma, \|u\|_{L^{\infty}(\Omega)}, \|v\|_{L^{\infty}(\Omega)}~ \text{and}~ |\Omega|$
 are unchanged. Thus, by John's lemma \cite{Jn},
 we can assume that $\Omega$ satisfies
 $$B_R\subset \Omega\subset B_{nR}~\text{for some}~R>0.$$
 Applying inequality (3.1) in \cite{L} to $\hat v:=\frac{v}{ \|v\|_{L^{\infty}(\Omega)}}$, we obtain for some $c(n,p)>0$
 \begin{equation}
 \label{BR20}
 \int_{B_{R/2}} |v|^{p}~dx=\|v\|^p_{L^{\infty}(\Omega)} \int_{B_{R/2}} |\hat v|^{p}~dx \geq c(n, p)\|v\|^p_{L^{\infty}(\Omega)}|\Omega|.
 \end{equation}
  Applying inequality (3.5) in \cite{L} to $\hat u:=\frac{u}{ \|u\|_{L^{\infty}(\Omega)}}$, we obtain for some $c(n)>0$
  \begin{equation}
  \label{BR2}
  \int_{B_{R/2}} \det D^2 u~dx =  \|u\|^n_{L^{\infty}(\Omega)}   \int_{B_{R/2}} \det D^2 \hat u~dx \leq c(n)   \|u\|^n_{L^{\infty}(\Omega)} |\Omega|^{-1}.
  \end{equation}
  Integrating both sides of the first equation of (\ref{Sa1}) over $B_{R/2}$ and then recalling (\ref{BR20})-(\ref{BR2}), we get
  \begin{equation}
  \label{aBR2}
  \sigma c(n, p)\|v\|^p_{L^{\infty}(\Omega)}|\Omega|\leq c(n)   \|u\|^n_{L^{\infty}(\Omega)} |\Omega|^{-1}.
  \end{equation}
On the other hand, applying the estimates at the end of the proof of Lemma 3.1 (i) in \cite{L} to  $\hat u:=\frac{u}{ \|u\|_{L^{\infty}(\Omega)}}$, we obtain
\begin{eqnarray}
\label{detuOm}
  \int_{\Omega} \det D^2 u~dx =  \|u\|^n_{L^{\infty}(\Omega)}   \int_{\Omega} \det D^2 \hat u~dx &\geq & \|u\|^n_{L^{\infty}(\Omega)}   \int_{\{x\in \Omega:\hat u(x) \leq -\frac{1}{2}\}} \det D^2 \hat u~dx\nonumber \\ &\geq& c(n)|\Omega|^{-1} \|u\|^n_{L^{\infty}(\Omega)}. 
\end{eqnarray}
It follows from (\ref{detuOm}) and first equation of (\ref{Sa1}) that
\begin{equation}
\label{aOm}
c(n)|\Omega|^{-1} \|u\|^n_{L^{\infty}(\Omega)}\leq   \int_{\Omega} \det D^2 u~dx = \sigma \int_{\Omega}|v|^p~dx \leq \sigma \|v\|^p_{L^{\infty}(\Omega)}|\Omega|.
\end{equation}
Therefore, (\ref{aBR2}) and (\ref{aOm}) give 
\begin{equation}
\label{aest1}
\sigma c(n, p)\|v\|^p_{L^{\infty}(\Omega)}|\Omega|\leq c(n)   \|u\|^n_{L^{\infty}(\Omega)} |\Omega|^{-1} \leq \sigma \|v\|^p_{L^{\infty}(\Omega)}|\Omega|.
\end{equation}
Similarly, for the second equation of (\ref{Sa1}), we obtain
 \begin{equation}
 \label{aest2}
\sigma c(n, p)\|u\|^{\frac{n^2}{p}}_{L^{\infty}(\Omega)}|\Omega|\leq c(n)   \|v\|^n_{L^{\infty}(\Omega)} |\Omega|^{-1} \leq \sigma \|u\|^{\frac{n^2}{p}}_{L^{\infty}(\Omega)}|\Omega|.
\end{equation}
Now, we can easily deduce from (\ref{aest1}) and (\ref{aest2}) that
\begin{equation*}
C^{-1}(n, p)|\Omega|^{-2} \leq \sigma \leq C(n, p)|\Omega|^{-2}, C^{-1}(n, p) \|v\|_{L^{\infty}(\Omega)} \leq  \|u\|^{\frac{n}{p}}_{L^{\infty}(\Omega)} \leq C(n, p) \|v\|_{L^{\infty}(\Omega)}
\end{equation*}
for some $C(n, p)>0$. The lemma is proved.
\end{proof}
Note that, by \cite[Theorem 1.5]{ZQ}, when $\Omega$ is a bounded, open, smooth and uniformly convex domain in $\R^n$, the system (\ref{Sa1}) has 
 nontrivial convex solutions $u\in C^{1}(\overline{\Omega})$ and $v\in C^{1}(\overline{\Omega})$ with a suitable $\sigma= \sigma(n,p,\Omega)>0$. Using the uniform estimates in Lemma \ref{suplem} and an approximation argument (see, for example, \cite[Proposition 5.2]{L}), we can extend the existence result of (\ref{Sa1}) to 
 general bounded open convex domains in $\R^n$. We record this result in the next proposition.
\begin{prop}
\label{uvathm}
Let $\Omega$ be a bounded open convex domain in $\R^n$ ($n\geq 2$). Let $p>0$. Then there exist 
a constant $\sigma>0$ and nontrivial convex functions $u$ and $v$ solving the 
system of Monge-Amp\`ere equations (\ref{Sa1}).
\end{prop}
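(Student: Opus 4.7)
The plan is to approximate $\Omega$ by smooth, uniformly convex domains, invoke the Zhang-Qi existence theorem on each approximating domain, and then pass to the limit using the uniform estimates of Lemma \ref{suplem}. This follows the scheme used to handle Monge-Ampère eigenvalues on general bounded convex domains in \cite[Proposition 5.2]{L}. Concretely, choose smooth, uniformly convex domains $\Omega_k$ with $\Omega_k\Subset\Omega_{k+1}\Subset\Omega$ and $\bigcup_k \Omega_k=\Omega$, so in particular $|\Omega_k|\to|\Omega|$ and $\Omega_k\to\Omega$ in Hausdorff distance. By \cite[Theorem 1.5]{ZQ}, for each $k$ there exist $\sigma_k>0$ and nontrivial convex $u_k,v_k\in C^1(\overline{\Omega_k})\cap C^\infty(\Omega_k)$ solving (\ref{Sa1}) on $\Omega_k$. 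Applying Remark \ref{gma_rem} on each $\Omega_k$, I may normalize so that $\|v_k\|_{L^\infty(\Omega_k)}=1$.

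Lemma \ref{suplem} then yields, since $|\Omega_k|$ is pinched between two positive constants, uniform bounds $c\leq \sigma_k\leq C$ and $c\leq \|u_k\|_{L^\infty(\Omega_k)}\leq C$ with constants depending only on $n,p,\Omega$. After extending $u_k,v_k$ by $0$ outside $\Omega_k$ and extracting a subsequence, I obtain $\sigma_k\to\sigma\in(0,\infty)$ and, by the classical compactness of locally uniformly bounded convex functions, $u_k\to u$ and $v_k\to v$ locally uniformly on $\Omega$, where $u,v$ are convex and nonpositive on $\Omega$. The weak continuity of the Monge-Ampère measure under locally uniform convergence of convex functions (see \cite[Lemma 1.2.3]{G} or \cite{Fi}), together with the locally uniform convergence of the right-hand sides $\sigma_k|v_k|^p\to\sigma|v|^p$ and $\sigma_k|u_k|^{n^2/p}\to\sigma|u|^{n^2/p}$, allows me to pass to the limit in both Monge-Ampère equations, so that $u,v$ solve (\ref{Sa1}) on $\Omega$ in the Aleksandrov sense with constant $\sigma$.

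The main obstacle is verifying the zero boundary condition on $\partial\Omega$ together with the nontriviality of the limits. For the boundary condition, given $x_0\in\partial\Omega$, I use that $u_k=v_k=0$ on $\partial\Omega_k$ combined with the convexity of $u_k,v_k$ and their uniform $L^\infty$ bounds: a standard convex-combination barrier between a supporting hyperplane of $\Omega$ at $x_0$ and the uniform upper bound on $|u_k|,|v_k|$ forces $u_k(x),v_k(x)\to 0$ as $x\to x_0$ uniformly in $k$, so the limits $u,v$ extend continuously to $0$ on $\partial\Omega$. For nontriviality, I integrate the first equation of (\ref{Sa1}) on $\Omega_k$ and apply the interior-mass estimate (\ref{detuOm}) (which holds on any bounded convex domain after a unimodular affine normalization) to get
\[
\sigma_k\int_{\Omega_k}|v_k|^p\,dx=\int_{\Omega_k}\det D^2 u_k\,dx\geq c|\Omega_k|^{-1}\|u_k\|_{L^\infty(\Omega_k)}^n\geq c'>0.
\]
Since the mass of $|v_k|^p$ is bounded below while $|v_k|$ itself is controlled near $\partial\Omega_k$ by convexity, this prevents concentration of $v_k$ near the boundary and yields $\|v\|_{L^\infty(\Omega)}>0$; the symmetric argument on the second equation, together with the lower bound $\|u_k\|_{L^\infty(\Omega_k)}\geq c$, gives $\|u\|_{L^\infty(\Omega)}>0$. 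This completes the construction of the desired triple $(\sigma,u,v)$ on $\Omega$.
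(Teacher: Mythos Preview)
Your proposal is correct and follows essentially the same approach as the paper: the paper does not give a detailed proof of Proposition~\ref{uvathm} but simply points to the uniform estimates of Lemma~\ref{suplem} together with the approximation argument of \cite[Proposition~5.2]{L}, and your outline---approximate $\Omega$ from within by smooth uniformly convex $\Omega_k$, apply \cite[Theorem~1.5]{ZQ} on each $\Omega_k$, normalize, extract limits via Lemma~\ref{suplem} and weak continuity of the Monge--Amp\`ere measure, and verify the boundary values and nontriviality---is exactly that scheme. One minor sharpening: the boundary behaviour is most cleanly obtained from the Aleksandrov maximum principle, which gives $|u_k(x)|^n\le C\,\dist(x,\partial\Omega_k)\le C\,\dist(x,\partial\Omega)$ uniformly in $k$ (since $\Omega_k\subset\Omega$), directly yielding $u,v\in C^{0,1/n}(\overline{\Omega})$ with zero boundary data.
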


Our main result in this section is concerned with global $C^{2,\alpha}$ regularity for the system of Monge-Amp\`ere equations (\ref{Sa1}).
\begin{thm}
\label{C2thm}
Let $\Omega$ be a  bounded, open, smooth and uniformly convex domain in $\R^n$ where $n\geq 2$. Let $p>0$.
Assume that $\sigma>0$ and nontrivial convex functions $u\in C(\overline{\Omega})$ and $v\in C(\overline{\Omega})$ solve the following system of Monge-Amp\`ere equations: 
\begin{equation}
\label{Sa2}
\left\{
 \begin{alignedat}{2}
   \det D^2 u~& = \sigma |v|^p~&&\text{in} ~  \Omega, \\\
    \det D^2 v~& = \sigma |u|^{n^2/p}~&&\text{in} ~  \Omega, \\\
u=v &= 0~&&\text{on}~ \p\Omega.
 \end{alignedat} 
  \right.
  \end{equation}
  Then $u\in C^{2,\beta_1}(\overline{\Omega})$ for all $\beta_1 < \min\{p, \frac{2}{2+ p}\}$ and $v\in C^{2,\beta_2}(\overline{\Omega})$ for all $\beta_2 < \min\{\frac{n^2}{p}, \frac{2}{2+\frac{n^2}{p}}\}.$
\end{thm}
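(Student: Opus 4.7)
The plan is to derive the global $C^{2,\beta_i}$ estimates through the chain: global Lipschitz bounds on $u, v$ $\Rightarrow$ global H\"older bounds on the right-hand sides $\Rightarrow$ interior $C^{2,\alpha}$ regularity $\Rightarrow$ boundary $C^{2,\alpha}$ regularity. The first three links are essentially standard; the last is the heart of the matter because the right-hand sides vanish on $\partial\Omega$.

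I would begin with the global Lipschitz bounds. Since $u, v \in C(\overline{\Omega})$ are bounded, the right-hand sides $\sigma|v|^p$ and $\sigma|u|^{n^2/p}$ lie in $L^\infty(\overline{\Omega})$, and the global Lipschitz estimate for the Monge-Amp\`ere Dirichlet problem on a smooth, uniformly convex domain (the key analytic input used by Zhang--Qi in \cite{ZQ}) yields $u, v \in C^{0,1}(\overline{\Omega})$. Because $u = v = 0$ on $\partial\Omega$ and both functions are Lipschitz, the elementary inequality $\bigl||a|^q - |b|^q\bigr| \le C_q|a-b|^{\min(1,q)}$ on bounded intervals gives $\sigma|v|^p \in C^{\min(1,p)}(\overline{\Omega})$ and $\sigma|u|^{n^2/p} \in C^{\min(1,n^2/p)}(\overline{\Omega})$. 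Interior strict convexity (cf.\ Proposition 2.8 of \cite{L}) combined with Caffarelli's interior $C^{2,\alpha}$ theorem then yields $u, v \in C^{2,\alpha}_{\mathrm{loc}}(\Omega)$ in the appropriate admissible ranges of $\alpha$.

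For the boundary estimate, the classical Caffarelli--Nirenberg--Spruck / Savin boundary $C^{2,\alpha}$ theorem requires the right-hand side to be bounded below by a positive constant, so it does not apply verbatim. The key additional input I would exploit is that, since $v$ is globally Lipschitz with $v = 0$ on $\partial\Omega$, the right-hand side obeys $\sigma|v|^p \le C\,\dist(\cdot, \partial\Omega)^p$ near $\partial\Omega$, with an analogous estimate for $\sigma|u|^{n^2/p}$. Invoking a boundary $C^{2,\alpha}$ regularity theorem for the Monge-Amp\`ere equation whose right-hand side is globally H\"older and vanishes like a power of the distance to $\partial\Omega$ (in the tradition of Caffarelli--Nirenberg--Spruck and subsequent refinements) produces $u \in C^{2,\beta_1}(\overline{\Omega})$ for $\beta_1 < \min\{p, 2/(2+p)\}$: the exponent $p$ inside the minimum encodes the global H\"older regularity of $\sigma|v|^p$ (effectively $\min(1,p)$, with any slack absorbed by the fact that $2/(2+p) < 1$), while $2/(2+p)$ is the sharp exponent produced by an anisotropic scaling of boundary sections of $u$, balancing the order of vanishing $\dist^p$ of the right-hand side against the Dirichlet condition $u = 0$. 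A symmetric argument for the equation for $v$ with exponent $n^2/p$ in place of $p$ gives $v \in C^{2,\beta_2}(\overline{\Omega})$ for $\beta_2 < \min\{n^2/p, 2/(2+n^2/p)\}$.

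The main obstacle will be the boundary estimate: because the right-hand sides vanish on $\partial\Omega$, the standard Savin-type boundary $C^{2,\alpha}$ theorems do not apply directly, and one must either invoke, in the precise form required, a degenerate-boundary regularity theorem from the Monge-Amp\`ere literature, or construct the estimate by hand via Pogorelov-type barriers combined with a blow-up / rescaling analysis that captures the sharp exponent $2/(2+p)$.
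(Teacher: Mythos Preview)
Your overall strategy is the right one and matches the paper's, but there is a genuine gap at the boundary step. The relevant degenerate boundary theorem is Le--Savin \cite[Theorem~1.2]{LS}, and its hypothesis is \emph{not} that the right-hand side is globally H\"older and bounded above by $C\dist^p$; it is that the right-hand side has the form $h^p\,\dist^p$ with the quotient $h=|v|/\dist$ H\"older up to the boundary and bounded away from zero. Knowing only that $v$ is globally Lipschitz gives $h\le C$, but neither $h\ge c>0$ nor $h\in C^{\alpha}(\overline\Omega)$. The lower bound requires a Hopf-type barrier (this is where the two-sided estimate $c\,\dist\le |v|\le C\,\dist$ enters), and H\"older continuity of $h$ up to $\partial\Omega$ requires at least $v\in C^{1,\alpha}(\overline\Omega)$, which is strictly stronger than Lipschitz. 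So the chain ``Lipschitz $\Rightarrow$ RHS $\in C^{\min(1,p)}$ $\Rightarrow$ apply degenerate boundary theorem'' does not close.

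The paper closes this gap by a bootstrap through Savin's boundary theory \cite{S} \emph{before} invoking \cite{LS}. From the two-sided linear bound on $|u|,|v|$ one gets quadratic separation of $u$ and $v$ from their tangent planes on $\partial\Omega$; then \cite[Proposition~3.5]{S} yields pointwise $C^{1,1/3}$ at every boundary point, hence $|u|/\dist,\,|v|/\dist\in C^{1/3}(\partial\Omega)$; feeding this into \cite[Remark~8.2]{S} upgrades to $u,v\in C^{1,\gamma}(\overline\Omega)$ for all $\gamma<1$, hence the quotients lie in $C^{\gamma}(\overline\Omega)$; \cite[Theorem~2.6]{S} then gives $u,v\in C^{2}(\overline\Omega)$, so the quotients are globally Lipschitz. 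Only at this point are the hypotheses of \cite[Theorem~1.2]{LS} verified, and that theorem delivers the stated $C^{2,\beta_i}$ exponents. Your proposal correctly anticipates the shape of the final exponent $\min\{p,\,2/(2+p)\}$, but the route to the hypotheses of the theorem that produces it is this iterated improvement, not a one-shot application.
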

As mentioned in the introduction, the existence of nontrivial convex functions $u\in C^{1}(\overline{\Omega})$ and $v\in C^{1}(\overline{\Omega})$ solving (\ref{Sa2}) with a suitable $\sigma>0$ was obtained in \cite{ZQ}.
\begin{proof}[Proof of Theorem \ref{C2thm}] The proof is similar to that of 
{\it Step 2} in the proof of \cite[Theorem 5.5]{L} which relies on the proof of Theorem 1.3 in Savin \cite{S}. Since our setting of system of Monge-Amp\`ere equations is slightly different, we include some crucial details for completeness.\\
{\it Step 1: Global $C^{2}$ regularity.}

We can assume that $\|v\|_{L^{\infty}(\Omega)}=1.$
Then, Lemma \ref{suplem} gives
$$C^{-1}(n,p)
\leq \|u\|_{L^{\infty}(\Omega)}\ \leq C(n, p)~\text{and }  C^{-1}(n, p)|\Omega|^{-2} \leq \sigma \leq C(n, p)|\Omega|^{-2}$$
for some positive constant $C(n,p)$.

First of all, we obtain, as in \cite[inequalities (7.1) and (7.2)]{L}, from the convexity of $u$ and the boundedness of the right hand side of $\det D^2 u = \sigma|v|^p$ the following estimates
\begin{equation}
\label{ubounds}
   c(n,p,\Omega) \dist(x,\p\Omega) \leq |u(x)|\leq C(n, p,\Omega) \dist(x,\p\Omega)~\text{for all}~x\in\Omega
\end{equation}
for some positive constants $c(n,p,\Omega)$ and $C(n,p,\Omega)$.

It follows from (\ref{ubounds}) that 
if $x_0 \in \p \Omega$ then $0<c(n,p,\Omega) \leq |D u(x_0)| \le C(n,p,\Omega)$. As a consequence, using the smoothness and uniform convexity of $\p\Omega$, we find that
on $\p \Omega$ the function $u$ separates quadratically from its tangent plane at each $x_0\in\p\Omega$, that is,
\begin{equation}
\label{usep}
\rho|x-x_0|^2\leq u(x)-u(x_0)-Du(x_0)\cdot (x-x_0)\leq \rho^{-1}|x-x_0|^2~\text{ for all }x\in\p\Omega
\end{equation}
for some positive constant $\rho=\rho(n, p, \Omega)$. 

Similarly, using the equation $\det D^2 v= \sigma |u|^{\frac{n^2}{p}}$, we also obtain 
\begin{equation}
\label{vbounds}
   c(n,p,\Omega) \dist(x,\p\Omega) \leq |v(x)|\leq C(n, p,\Omega) \dist(x,\p\Omega)~\text{for all}~x\in\Omega
\end{equation}
and that
for each $x_0\in\p\Omega$, the following quadratic separation estimates for $v$ hold:
\begin{equation}
\label{vsep}
\rho|x-x_0|^2\leq v(x)-v(x_0)-Dv(x_0)\cdot (x-x_0)\leq \rho^{-1}|x-x_0|^2~\text{for all }x\in\p\Omega.
\end{equation}
From (\ref{vbounds}) and the boundedness of $\sigma$, we can apply \cite[Proposition 3.5]{S} to the first equation of (\ref{Sa2}) to conclude that $u$ is pointwise $C^{1,1/3}$ at all points on $\p\Omega$, that is,
$$0\leq u(x) - u(x_0)-Du(x_0)\cdot (x-x_0)\leq C(n,p,\Omega) |x-x_0|^{4/3}~\text{for all } x\in\Omega~\text{and all } x_0\in\p\Omega.$$
This implies that
$Du\in C^{1/3}(\p\Omega)$ and that 
\begin{equation}\label{Du13} g(x):= \frac{|u(x)|}{\dist(x,\p\Omega)}
~\text{ has a uniform }C^{1/3}~ \text{modulus of continuity on }\p\Omega. 
\end{equation}
Similarly, $Dv\in C^{1/3}(\p\Omega)$ and that
\begin{equation}\label{Dv13} h(x):= \frac{|v(x)|}{\dist(x,\p\Omega)}
~\text{ has a uniform }C^{1/3}~ \text{modulus of continuity on }\p\Omega. 
\end{equation}
From 
\begin{equation}
\label{uga}
\left\{
 \begin{alignedat}{2}
   \det D^2 u~& = \sigma |v|^p~&&\text{in} ~  \Omega, \\\
u &= 0~&&\text{on}~ \p\Omega.
 \end{alignedat} 
  \right.
  \end{equation}
  together with (\ref{usep}) and (\ref{Dv13}), we can use \cite[Remark 8.2]{S} to conclude that $u\in C^{1,\gamma}(\overline{\Omega})$ for all $\gamma<1$. This implies that 
  \begin{equation}
  \label{gga}
  g\in C^{\gamma}(\overline{\Omega})~\text{for all }\gamma<1
  \end{equation}
  Similarly, we also have
  \begin{equation}
  \label{hga}
  h\in C^{\gamma}(\overline{\Omega})~\text{for all }\gamma<1.
  \end{equation}
Now, by using \cite[Theorem 2.6]{S}, we obtain from (\ref{uga}), (\ref{usep}) and (\ref{hga})  the global
$C^{2}(\overline{\Omega})$ regularity of $u$.  

Similarly, we obtain from (\ref{vsep}) and (\ref{gga}) and 
\begin{equation}
\label{vga}
\left\{
 \begin{alignedat}{2}
   \det D^2 v~& = \sigma |u|^{\frac{n^2}{p}}~&&\text{in} ~  \Omega, \\\
v &= 0~&&\text{on}~ \p\Omega
 \end{alignedat} 
  \right.
  \end{equation}
    the global
$C^{2}(\overline{\Omega})$ regularity of $v$. 

{\it Step 2: Global $C^{2,\beta}$ regularity.}

 A consequence of the global
$C^{2}(\overline{\Omega})$ regularity for $u$ and $v$ in {\it Step 1} is that 
 $g, h\in C^{0, 1}(\overline{\Omega})$. Then the conditions
of Theorem 1.2 in \cite{LS} are satisfied for the equations (\ref{uga}) and (\ref{vga}) and therefore, we can conclude from this theorem 
that $u\in C^{2,\beta_1} (\overline{\Omega})$ for all $\beta_1 < \min \{p, \frac{2}{2+p}\}$ and $v\in C^{2,\beta_2}(\overline{\Omega})$ for all $\beta_2 < \min\{\frac{n^2}{p}, \frac{2}{2+\frac{n^2}{p}}\}.$
\end{proof}
\begin{rem}
\label{Urem}
In the setting of Theorem \ref{C2thm}, if we normalize $\|v\|_{L^{\infty}(\Omega)}=1$, then from \cite[Theorems 1.1 and 1.2]{LS}, we obtain more precise information about  $D^2 u$ near the boundary.
Indeed, the eigenvalues $\lambda_1(D^2u)\leq\cdots \leq \lambda_n(D^2u)$ of the Hessian matrix $D^2 u$ satisfy
$$\lambda_1 \geq c(n,p,\Omega) \dist^p (x, \p\Omega)~\text{and } \lambda_2\geq c(n,p,\Omega)$$
for some positive constant $c(n,p,\Omega)$.
\end{rem}
\section{Proof of Theorem \ref{uni_thm}}
\label{nsec}
In the proof of Theorem \ref{uni_thm}, we will use the following {\it nonlinear integration by parts} established in \cite[Proposition 1.7]{L}.
\begin{prop}
 \label{NIBP} Let $\Omega$ be a bounded open convex domain in $\R^n$.
 Suppose that $u, v\in C(\overline{\Omega})\cap C^5 (\Omega)$ are strictly convex functions in $\Omega$ with $u=v=0$ on $\p\Omega$ and that there is a constant $M>0$ such that
 \begin{equation}\int_{\Omega}(\det D^2 u)^{\frac{1}{n}}  (\det D^2 v)^{\frac{n-1}{n}}~dx\leq M,~\text{and}~\int_{\Omega}\det D^2 v~dx\leq M.
 \label{detuv}
 \end{equation} Then
\begin{equation} \label{term0}\int_{\Omega} |u|\det D^2 v~dx \geq \int_{\Omega} |v|(\det D^2 u)^{\frac{1}{n}} (\det D^2 v)^{\frac{n-1}{n}}~dx.
\end{equation}
\end{prop}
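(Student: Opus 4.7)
My plan is to derive (\ref{term0}) by combining the matrix arithmetic--geometric mean inequality with a nonlinear integration by parts identity rooted in the Piola identity. Since $u,v$ are convex in $\Omega$ and vanish on $\partial\Omega$, we have $u, v\leq 0$ in $\Omega$, so $|u|=-u$ and $|v|=-v$. Writing $V^{ij}$ for the $(i,j)$-entry of $\cof(D^2 v)$, AM--GM applied to the eigenvalues of the positive semidefinite matrix $(D^2 v)^{-1/2}(D^2 u)(D^2 v)^{-1/2}$ yields the pointwise bound
\begin{equation*}
n\,(\det D^2 u)^{1/n}(\det D^2 v)^{(n-1)/n}\leq u_{ij}V^{ij}\quad\text{in }\Omega.
\end{equation*}
Multiplying by $|v|$ and integrating, (\ref{term0}) will follow once I establish the identity
\begin{equation}
\label{ibpid}
n\int_\Omega|u|\det D^2 v\,dx=\int_\Omega|v|\,u_{ij}V^{ij}\,dx,
\end{equation}
which is the genuine nonlinear integration by parts.

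To produce (\ref{ibpid}), I use the pointwise algebraic identity
\begin{equation*}
(v\,u_{ij}-u\,v_{ij})\,V^{ij}=\partial_i\!\bigl((v\,u_j-u\,v_j)\,V^{ij}\bigr),
\end{equation*}
which follows from the Piola identity $\partial_i V^{ij}=0$ (valid because $v\in C^3(\Omega)$) together with the cancellation $v_i u_j V^{ij}=u_i v_j V^{ij}$ coming from the symmetry $V^{ij}=V^{ji}$. Using $v_{ij}V^{ij}=n\det D^2 v$, integrating this identity over $\Omega$ reproduces, on the left, exactly $n\int_\Omega|u|\det D^2 v\,dx-\int_\Omega|v|u_{ij}V^{ij}\,dx$, while the right-hand side is a surface integral on $\partial\Omega$ that carries factors of $u$ or $v$ and so vanishes formally by the boundary condition.

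The main obstacle is to make this boundary step rigorous, since $\nabla u$, $\nabla v$, and $V^{ij}$ need not have integrable traces on $\partial\Omega$. I would exhaust $\Omega$ by the smooth, relatively compact sublevel sets $\Omega_\epsilon:=\{v<-\epsilon\}$, on which $u,v\in C^5(\overline{\Omega_\epsilon})$ and the divergence theorem is available. The deviation from (\ref{ibpid}) on $\Omega_\epsilon$ is then
\begin{equation*}
R(\epsilon):=\int_{\partial\Omega_\epsilon}(v u_j-u v_j)V^{ij}\nu_i\,dS=-\int_{\partial\Omega_\epsilon}u\,v_j V^{ij}\nu_i\,dS-\epsilon\int_{\Omega_\epsilon}u_{ij}V^{ij}\,dx,
\end{equation*}
after using $v\equiv-\epsilon$ on $\partial\Omega_\epsilon$ and applying the divergence theorem with Piola to rewrite the $\epsilon$-term. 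The first piece is bounded by $\sup_{\partial\Omega_\epsilon}|u|\cdot n\int_{\Omega_\epsilon}\det D^2 v\,dx=o(1)$ thanks to continuity of $u$ and the second hypothesis in (\ref{detuv}). The delicate piece is $\epsilon\int_{\Omega_\epsilon}u_{ij}V^{ij}\,dx$: writing it as $\epsilon\int_{\partial\Omega_\epsilon}u_i V^{ij}\nu_j\,dS$, a matrix Cauchy--Schwarz $|u_i V^{ij}\nu_j|\leq\sqrt{u_i V^{ij}u_j}\sqrt{\nu_i V^{ij}\nu_j}$ followed by an integral Cauchy--Schwarz reduces matters to controlling $\int_{\partial\Omega_\epsilon}u_i V^{ij}u_j/|\nabla v|\,dS$, which via the coarea formula is a derivative in $\epsilon$ of an integral bounded by the first quantity in (\ref{detuv}); the companion factor $\int_{\partial\Omega_\epsilon}v_i V^{ij}v_j/|\nabla v|\,dS$ equals $n\int_{\Omega_\epsilon}\det D^2 v\,dx\leq nM$. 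Once $R(\epsilon)\to 0$, monotone convergence of $\int_{\Omega_\epsilon}|u|\det D^2 v\,dx$ and $\int_{\Omega_\epsilon}|v|\,u_{ij}V^{ij}\,dx$ to their counterparts on $\Omega$ yields (\ref{ibpid}), and combining with the AM--GM pointwise bound produces (\ref{term0}).
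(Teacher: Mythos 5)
Your skeleton (matrix AM--GM plus Piola-identity integration by parts on the sublevel sets $\Omega_\epsilon=\{v<-\epsilon\}$) is the right general framework, but there is a genuine gap at the crux: the term $\epsilon\int_{\Omega_\epsilon}u_{ij}V^{ij}\,dx$. In your decomposition of $R(\epsilon)$ this term carries the sign that is \emph{unfavorable} for the direction of (\ref{term0}) (you need $\int_\Omega|v|u_{ij}V^{ij}\,dx\leq n\int_\Omega|u|\det D^2v\,dx$, and this term sits on the wrong side), so it cannot be discarded and must be shown to vanish. Your justification --- matrix and integral Cauchy--Schwarz reducing matters to $\int_{\partial\Omega_\epsilon}u_iV^{ij}u_j/|\nabla v|\,dS$, ``which via the coarea formula is a derivative in $\epsilon$ of an integral bounded by the first quantity in (\ref{detuv})'' --- is not correct: the coarea companion is $\int u_iV^{ij}u_j\,dx$, a first-order quantity in $u$ that is not controlled by $\int(\det D^2u)^{1/n}(\det D^2v)^{(n-1)/n}\,dx$, pointwise or integrally. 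For example, on $\Omega=B_1$ with $v=|x|^2-1$ (so $V^{ij}$ is a constant multiple of the identity) and $u$ behaving like $-(1-|x|)^a$ near $\p\Omega$ with $1/n<a\leq 1/2$ and $n\geq 3$, both hypotheses in (\ref{detuv}) hold while $\int_\Omega u_iV^{ij}u_j\,dx\sim\int|\nabla u|^2\,dx=\infty$. So the claimed full equality $n\int_\Omega|u|\det D^2v\,dx=\int_\Omega|v|\,u_{ij}V^{ij}\,dx$ is not established by your argument, and it is in any case stronger than what the hypotheses are designed to give (they provide no upper control on $u_{ij}V^{ij}$, only on its geometric-mean lower bound; the right-hand side could a priori even be infinite).

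The cited proof in \cite{L} avoids this entirely by running a one-sided argument with the \emph{shifted} multiplier $|v+\epsilon|=-(v+\epsilon)$ on $\Omega_\epsilon$ instead of $|v|$: applying AM--GM and integrating by parts twice via $\p_iV^{ij}=0$ gives
\begin{equation*}
n\int_{\Omega_\epsilon}(-v-\epsilon)(\det D^2u)^{\frac1n}(\det D^2v)^{\frac{n-1}{n}}dx\leq\int_{\Omega_\epsilon}(-v-\epsilon)\,u_{ij}V^{ij}dx=\int_{\p\Omega_\epsilon}u\,\frac{v_iV^{ij}v_j}{|\nabla v|}\,dS+\int_{\Omega_\epsilon}(-u)\,v_{ij}V^{ij}dx,
\end{equation*}
where the first boundary term (the analogue of your problematic $\epsilon$-term) vanishes identically because $v+\epsilon=0$ on $\p\Omega_\epsilon$, and the remaining boundary term is $\leq 0$ since $u\leq 0$ and $v_iV^{ij}v_j\geq 0$, so it is simply dropped rather than estimated. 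This yields $n\int_{\Omega_\epsilon}(-v-\epsilon)(\det D^2u)^{1/n}(\det D^2v)^{(n-1)/n}dx\leq n\int_{\Omega}(-u)\det D^2v\,dx$, and the hypotheses (\ref{detuv}) are used only to pass to the limit: the first bound absorbs the $\epsilon$-correction $\epsilon\int_{\Omega_\epsilon}(\det D^2u)^{1/n}(\det D^2v)^{(n-1)/n}dx\leq\epsilon M\to 0$, and the second guarantees the right-hand side of (\ref{term0}) is finite. If you replace your two-sided identity by this shifted-multiplier, sign-based argument, your proof goes through; as written, the vanishing of $\epsilon\int_{\Omega_\epsilon}u_{ij}V^{ij}\,dx$ is unproved and your stated reason for it is false.
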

\begin{proof}[Proof of Theorem \ref{uni_thm}]
To simplify notation, let us denote the Monge-Amp\`ere eigenvalue $\lambda[\Omega]$ of $\Omega$ by $\lambda$. Let $w$ be a Monge-Amp\`ere eigenfunction of $\Omega$ as in Theorem \ref{ev_thm}(i). We note that nontrivial convex solutions $u$ and $v$ to (\ref{MAS}) satisfy $|u(x)|>0$ and $|v(x)|>0$ for all $x\in\Omega$.

As in \cite[Proposition 5.3]{L}, we can show that 
for all $\beta\in (0, 1)$, we have $u, v\in C^{0,\beta}(\overline{\Omega})$ with the estimate
 \begin{equation}
 \label{uvLip}
 |u(x)| + |v(x)|\leq C(n, \beta, \text{diam } (\Omega)) [\text{dist}(x,\p\Omega) ]^{\beta}\left(\|u\|_{L^{\infty}(\Omega)}+ \|v\|_{L^{\infty}(\Omega)}\right)~\text{for all}~x\in\Omega.
 \end{equation}
 From the convexity of $u$ and $u=0$ on $\p\Omega$, we have the gradient estimate
\begin{equation}|Du(x)|\leq \frac{|u(x)|}{\dist (x, \p\Omega)} ~\text{for all}~x\in\Omega.
\label{Du_est}
\end{equation}
Using (\ref{uvLip}) and (\ref{Du_est}), we can argue as in the proof of \cite[Lemma 5.7]{L} to obtain
\begin{equation}\int_{\Omega} (\Delta u + \Delta v) |w|^{n-1}~dx\leq C(n,\Omega)\left(\|u\|_{L^{\infty}(\Omega)}+ \|v\|_{L^{\infty}(\Omega)}\right)\|w\|^{n-1}_{L^{\infty}(\Omega)}.
\label{D2est}
\end{equation}
Because $u+ v$ is smooth and convex in $\Omega$, by the Arithmetic-Geometric inequality, we have
$$n(\det D^2 (u+ v))^{\frac{1}{n}}\leq \Delta (u+ v).$$
From (\ref{D2est}), we find that
\begin{eqnarray}\int_{\Omega}(\det D^2 (u+ v))^{\frac{1}{n}} (\det D^2 w)^{\frac{n-1}{n}} ~dx&\leq&\frac{1}{n} \int_{\Omega}\lambda^{\frac{n-1}{n}}\Delta (u+ v)|w|^{n-1}~dx \nonumber \\ &\leq& C(n,\Omega)
\left(\|u\|_{L^{\infty}(\Omega)}+ \|v\|_{L^{\infty}(\Omega)}\right)\|w\|^{n-1}_{L^{\infty}(\Omega)}.
\label{check_IBP}
\end{eqnarray}
{\it Step 1: $\mu\geq \lambda$}. 

By the characterization of $\lambda$ in Theorem \ref{ev_thm}(i) and the first two equations of (\ref{MAS}),
we find that
\begin{eqnarray}
\label{lammu}
\lambda \int_{\Omega}\left(|u|^{n+1} + |v|^{n+1}\right)dx& \leq& \int_{\Omega} |u| \det D^2 udx + \int_{\Omega} |v| \det D^2 vdx\nonumber\\ &=&  \mu \int_{\Omega}\left (|u| |v|^n + |v| |u|^n \right) dx.
\end{eqnarray}
On the other hand, for each $x\in \Omega$, we have
\begin{multline}
\label{uvn}
|u(x)|^{n+1} + |v(x)|^{n+1} - \left(|u(x)| |v(x)|^n + |v(x)| |u(x)|^n \right)\\ = (|u(x)|-|v(x)|)^2 \sum_{i=1}^{n} |u(x)|^{n-i} |v(x)|^{i-1}\geq 0,
\end{multline}
with equality if and only if $|u(x)|= |v(x)|$.

Combining (\ref{lammu}) with (\ref{uvn}), we obtain 
$\mu\geq \lambda$ as claimed.
\vglue 0.2cm
\noindent
{\it Step 2: $\mu\leq \lambda$}.

In this step, we will use the matrix inequality
$$[\det (A+ B)]^{\frac{1}{n}}\geq (\det A)^{\frac{1}{n}} + (\det B)^{\frac{1}{n}} ~\text{for}~ A, B~\text{symmetric, positive definite}$$
with equality if and only if $A= cB$ for some positive constant $c$.

For all $x\in\Omega$, we have from the above inequality and (\ref{MAS}) that
\begin{equation}
\label{lam_uv}
(\det D^2 (u+ v)(x))^{\frac{1}{n}} \geq (\det D^2 u(x))^{\frac{1}{n}}  + (\det D^2  v(x))^{\frac{1}{n}} =  \mu^{\frac{1}{n}} |u(x) + v(x)|
\end{equation}
with equality if and only if $D^2 u(x)= C(x)D^2 v(x)$ for some positive constant $C(x)$.

By (\ref{check_IBP}), we can apply Proposition \ref{NIBP} to $u+ v$ and $w$. 
Applying Proposition \ref{NIBP} to $u+ v$ and $w$ and using (\ref{lam_uv}), we obtain
 \begin{eqnarray*}\int_{\Omega} \lambda |u+ v| |w|^n~dx= \int_{\Omega}|u+v|\det D^2 w ~dx&\geq&\int_{\Omega}(\det D^2 (u+ v))^{\frac{1}{n}} (\det D^2 w)^{\frac{n-1}{n}} |w|~dx\\ &\geq& \int_{\Omega} \mu^{\frac{1}{n}}\lambda ^{\frac{n-1}{n}}|u+ v| |w|^n~dx.
 \end{eqnarray*}
 It follows that $\lambda\geq \mu$.
 
 {\it Step 3: conclusion.}
 
 From {\it Step 1} and {\it Step 2}, we find that $\mu=\lambda$ and
 we must have equalities  in (\ref{uvn}) and (\ref{lam_uv}) for all $x\in\Omega$. It follows that $|u|= |v|$ in $\Omega$. Thus $u=v$ and $u$ solves $\det D^2 u=\lambda |u|^n$ in $\Omega$ with $u=0$ on $\p\Omega$. By Theorem \ref{ev_thm} (ii), $u$ is a Monge-Amp\`ere eigenfunction of $\Omega$.
 \end{proof}

\section{Proof of Theorem \ref{uni_thmp}}
\label{psec}
In this section, we prove the uniqueness result as stated in Theorem \ref{uni_thmp}. Our proof is inspired by that of \cite[Theorem 1.1(2)]{H}.
\begin{proof}[Proof of Theorem \ref{uni_thmp}]
By Remark \ref{gma_rem}, it suffices to prove the uniqueness of nontrivial convex solutions to the system of Monge-Amp\`ere equations:
\begin{equation*}
\left\{
 \begin{alignedat}{2}
   \det D^2 u~& = \sigma |v|^p~&&\text{in} ~  \Omega, \\\
    \det D^2 v~& = \sigma |u|^{n^2/p}~&&\text{in} ~  \Omega, \\\
u=v &= 0~&&\text{on}~ \p\Omega.
 \end{alignedat} 
  \right.
  \end{equation*}
By the symmetry of $p$ and $n^2/p$, it suffices to prove uniqueness for $p-n> 0$ small since the case $p=n$ is covered by Theorem \ref{uni_thm}. 
We argue by contradiction.

Suppose that for a sequence $p_k\searrow n$, the following system of Monge-Amp\`ere equations
\begin{equation}
\label{MASak}
\left\{
 \begin{alignedat}{2}
   \det D^2 u_k~& = \sigma_k |v_k|^{p_k}~&&\text{in} ~  \Omega, \\\
    \det D^2 v_k~& = \sigma_k |u_k|^{n^2/p_k}~&&\text{in} ~  \Omega, \\\
u_k=v_k &= 0~&&\text{on}~ \p\Omega
 \end{alignedat} 
  \right.
  \end{equation}
has at least two distinguished pairs of convex solutions $(u_k, v_k)$ and $(\tilde u_k, \tilde v_k)$ where
\begin{equation}
\label{vk1}
\|v_k\|_{L^{\infty}(\Omega)}=\|\tilde v_k\|_{L^{\infty}(\Omega)}=1.
\end{equation}
We can assume that for all $k$
 \begin{equation}
 \label{pkn}
 n< p_k\leq n+ \frac{1}{2},~\text{and }\|u_k\|_{L^{\infty}(\Omega)}\geq \|\tilde u_k\|_{L^{\infty}(\Omega)}.
 \end{equation}
 Taking a subsequence if necessary, and without loss of generality, we can assume that 
\begin{equation}
\label{taulim}
\lim_{k\rightarrow \infty}  \frac{\|\tilde v_k-v_k\|_{L^{\infty}(\Omega)}}{\|\tilde u_k-u_k\|_{L^{\infty}(\Omega)}}=\tau\in [0,1].
\end{equation}
Let $$\phi_k= \frac{\tilde u_k - u_k}{\|\tilde u_k-u_k\|_{L^{\infty}(\Omega)}},~\text{and } \varphi_k=  \frac{\tilde v_k - v_k}{\|\tilde v_k-v_k\|_{L^{\infty}(\Omega)}}. $$
We will prove (see {\it Step 6}) that  for all $k$ large
$$\phi_k>0, ~\text{and }\varphi_k>0~\text{in }\Omega$$
and this will clearly lead to a contradiction to (\ref{vk1}). Hence, we must have the uniqueness of solutions as stated in the theorem. We now proceed with  proof with several steps.
\vglue 0.2cm
\noindent
{\it Step 1: Convergence of $\sigma_k$ to the Monge-Amp\`ere eigenvalue of $\Omega$ and convergence of $u_k$, $v_k$, $\tilde u_k$ and $\tilde v_k$ in $C^{0,\frac{1}{n}}(\overline{\Omega})$ to the same Monge-Amp\`ere eigenfunction of $\Omega$.}

Recalling (\ref{unp}) together with (\ref{vk1}), and using the Aleksandrov maximum principle
 (see \cite[Theorem 2.8]{Fi} and \cite[Theorem 1.4.2]{G}) and  the compactness of solutions to the Monge-Amp\`ere equation (see \cite[Corollary 2.12]{Fi} and \cite[Lemma 5.3.1]{G}), we find that up to extracting a subsequence, 
 $\sigma_k\rightarrow \sigma$, while $u_k\rightarrow u$ and $v_k\rightarrow v$ uniformly in $C^{0,\frac{1}{n}}(\overline{\Omega})$, and the following system holds

\begin{equation*}
\left\{
 \begin{alignedat}{2}
   \det D^2 u~& = \sigma |v|^n~&&\text{in} ~  \Omega, \\\
    \det D^2 v~& = \sigma |u|^{n}~&&\text{in} ~  \Omega, \\\
u=v &= 0~&&\text{on}~ \p\Omega.
 \end{alignedat} 
  \right.
  \end{equation*}
By Theorem \ref{uni_thm}, we have the uniqueness, that is, $\sigma=\lambda$ is the Monge-Amp\`ere eigenvalue of $\Omega$ and $u=v=w$ is the Monge-Amp\`ere eigenfunction of $\Omega$ with $L^{\infty}$ norm being $1$:

\begin{equation}
\label{wdef}
\left\{
 \begin{alignedat}{2}
   \det D^2 w~& = \lambda |w|^n~&&\text{in} ~  \Omega, \\\
w&= 0~&&\text{on}~ \p\Omega,\\
\|w\|_{L^{\infty}(\Omega)}&= 1.
 \end{alignedat} 
  \right.
  \end{equation}
  By this uniqueness, we actually have the full convergences of $\sigma_k$ to $\lambda$, $u_k$ to $w$ and $v_k$ to $w$ uniformly in $C^{0,\frac{1}{n}}(\overline{\Omega})$ when $k\rightarrow\infty$. Similarly, we also have the full convergences of $\tilde u_k$ to $w$ and $\tilde v_k$ to $w$ uniformly in $C^{0,\frac{1}{n}}(\overline{\Omega})$ when $k\rightarrow\infty$.
  
  We denote by $W=(W^{ij})_{1\leq i, j\leq n}= \cof (D^2 w)$ the cofactor matrix of the Hessian $D^2 w$, so that
  $$W= (\det D^2 w) (D^2 w)^{-1}~\text{in }\Omega.$$
  For later use, we note that for some constant $c(\Omega)>0$
  \begin{equation}
  \label{wcc}
  c(\Omega)\dist(x,\Omega) \leq |w(x)|=-w(x) \leq c^{-1}(\Omega)\dist(x,\Omega).
  \end{equation}
In the next steps, 
   the convex function $\psi\in C^{\infty}(\overline{\Omega})$ solving the Monge-Amp\`ere equation
\begin{equation*}
\left\{
 \begin{alignedat}{2}
   \det D^2 \psi~& = 1~&&\text{in} ~  \Omega, \\\
\psi &= 0~&&\text{on}~ \p\Omega
 \end{alignedat} 
  \right.
  \end{equation*}
  will be very useful in our comparison arguments. 
  
  Observe that
  for some constant $c_0= c_0 (n,\Omega)>0$ 
\begin{equation}
\label{psi_est}
D^2 \psi\geq c_0 I_n,~\text{and } c_0 \dist(x,\p\Omega) \leq |\psi(x)| \leq c^{-1}_0 \dist(x,\p\Omega) ~\text{in }\overline{\Omega}.
\end{equation}
 \vglue 0.2cm
\noindent 
{\it Step 2: Systems of linearized Monge-Amp\`ere equations for $\phi_k$ and $\varphi_k$.} 

Throughout, we will use the following notation:  {\it $f_{ij}=\frac{\p^2 f}{\p x_i \p x_j}$ for a function $f$ and $A_{ij}$ for the $(i, j)$ entry of a matrix A.}

Note that
$$\det D^2 u_k -\det D^2 \tilde u_k = U^{ij}_k (u_k- \tilde u_k)_{ij}~ \text{and}~ (-v_k)^{p_k} -  (-\tilde v_k)^{p_k} =V_k (\tilde v_k - v_k)$$
where
$$U^{ij}_k= \int_{0}^1 [\cof ( t D^2 u_k + (1-t) D^2\tilde  u_k)]_{ij}dt, $$
and
$$V_k= \int_{0}^{1} p_k[- t v_k -(1-t) \tilde v_k]^{p_k-1} dt=\int_{0}^{1} p_k|t v_k +(1-t) \tilde v_k|^{p_k-1} dt .$$
From
$$\det D^2 u_k -\det D^2 \tilde u_k =\sigma_k (-v_k)^{p_k} -\sigma_k (-\tilde v_k)^{p_k} $$
we obtain
$$
-U^{ij}_k (\tilde u_k - u_k)_{ ij} = \sigma_k V_k (\tilde v_k- v_k),
$$
or, $\phi_k$ and $\varphi_k$ satisfies the following linearized Monge-Amp\`ere equation
\begin{equation}
\label{LMAphik}
U^{ij}_k \phi_{k, ij} + \sigma_k V_k \varphi_{k} \frac{\|\tilde v_k-v_k\|_{L^{\infty}(\Omega)}}{\|\tilde u_k-u_k\|_{L^{\infty}(\Omega)}}=0.
\end{equation}
Similarly, we have
\begin{equation}
\label{LMAvarphik}
V^{ij}_k \varphi_{k, ij} + \sigma_k U_k \phi_{k} \frac{\|\tilde u_k-u_k\|_{L^{\infty}(\Omega)}}{\|\tilde v_k-v_k\|_{L^{\infty}(\Omega)}}=0.
\end{equation}
where
$$V^{ij}_k= \int_{0}^1 [\cof ( t D^2 v_k + (1-t) D^2\tilde  v_k)]_{ij}dt ~
\text{ and }
U_k= \int_{0}^{1} \frac{n^2}{p_k}| t u_k +(1-t) \tilde u_k|^{\frac{n^2}{p_k}-1} dt.$$
When $k\rightarrow \infty$, we deduce from {\it Step 1} and Theorem \ref{C2thm} that for $\beta:=\frac{2}{3+ n}$,
\begin{equation}
\label{Vkcon}
V_k\rightarrow n |w|^{n-1},~U_k\rightarrow n|w|^{n-1}~\text{uniformly on } C^{2,\beta}(\overline{\Omega}),
\end{equation}
while
\begin{equation} 
\label{Vijcon}
U^{ij}_k\rightarrow W^{ij},~ V^{ij}_k\rightarrow W^{ij}~\text{uniformly on } C^{\beta}(\overline{\Omega}).
\end{equation}
\vglue 0.2cm
\noindent
{\it Step 3: $|\phi_k(x)|\leq C(n,\Omega)\dist(x,\p\Omega)$ for $k$ large.}

By (\ref{psi_est}), it suffices to show that for all $k$ large
\begin{equation}
\label{phikpsi}
|\phi_k| \leq C(n,\Omega)|\psi|~\text{in }\Omega.
\end{equation}
Indeed, as in (\ref{vbounds}) of the proof of Theorem \ref{C2thm}, we have
$$ c(\Omega)\dist(x,\Omega)\leq |v_k(x)| \leq  C(n,\Omega) \dist(x,\p\Omega)$$
and
$$ c(\Omega)\dist(x,\Omega)\leq |\tilde v_k(x)| \leq  C(n,\Omega) \dist(x,\p\Omega).$$
Therefore, for all $k$, we have
\begin{equation}
\label{Vkdist}
|V_k(x)|\leq p_k C^{p_k-1}(n,\Omega) \dist^{p_k-1}(x,\p\Omega) \leq C_1(n, \Omega) \dist^{n-1}(x,\p\Omega)
\end{equation}
where we used (\ref{pkn}) in the last inequality.

On the other hand, by {\it Step 1} and (\ref{taulim})
$$\sigma_k\leq 2\lambda,\quad \frac{\|\tilde v_k-v_k\|_{L^{\infty}(\Omega)}}{\|\tilde u_k-u_k\|_{L^{\infty}(\Omega)}} \leq 2\tau+1~\text{for all  large }k .$$
Thus, in view of (\ref{LMAphik}), for all $k$ large,  we have in $\Omega$
\begin{equation}
\label{estk1}
|U^{ij}_k \phi_{k, ij}| = \sigma_k |V_k| |\varphi_k| \frac{\|\tilde v_k-v_k\|_{L^{\infty}(\Omega)}}{\|\tilde u_k-u_k\|_{L^{\infty}(\Omega)}} \leq 2 \lambda(2\tau+1) |V_k|\leq C_2(n,\Omega) \dist^{n-1}(x,\p\Omega) .
\end{equation}
From Remark \ref{Urem}, we infer that the eigenvalues $\lambda_{k, 1}\leq\cdots\leq \lambda_{k,n}$ of $U^{ij}_k$ satisfies for some $c_1=c_1(n,\Omega)>0$
$$\lambda_{k, n}\geq c_1; \lambda_{k, 1}\geq c_1\dist^{p_k} (x,\p\Omega).$$
It follows from the above estimates and (\ref{psi_est}) that 
\begin{equation}
\label{Upsi}
U^{ij}_k \psi_{ij} \geq c_0 \text{trace} (U^{ij}_k) \geq c_0 c_1:= c_2.
\end{equation}
Thus for $C(n,\Omega)$ and $k$ large, we have from (\ref{estk1}) and (\ref{Upsi})
$$U^{ij}_k (-C(n,\Omega)\psi)_{ij}<U^{ij}_k \phi_{k, ij}< U^{ij}_k (C(n,\Omega)\psi)_{ij} ~\text{in }\Omega.$$
Using the maximum principle, we obtain
 (\ref{phikpsi}).
\vglue 0.2cm
\noindent
{\it Step 4: $\tau>0$}.

Indeed, suppose otherwise that $\tau$ defined by (\ref{taulim}) satisfies $\tau=0.$ In this case, we use the result of  {\it Step 3} together with (\ref{Vkcon}) and (\ref{Vijcon}) (in fact, only the locally uniform convergences suffice) to pass to the limit of $k\rightarrow\infty$ in (\ref{LMAphik}).  By {\it Step 3}, we can assume, up to extracting a subsequence, that $\phi_k$ converges locally uniformly in $C^{2,\beta}(\Omega)$ and uniformly in $C^{0,1}(\overline{\Omega})$ to a  Lipschitz function $\phi\in C^{2,\beta}(\Omega)\cap C^{0, 1}(\overline{\Omega})$.  Letting $k\rightarrow\infty$ in (\ref{LMAphik}) and using (\ref{Vkcon}), (\ref{Vijcon}), (\ref{Vkdist}) and $\tau=0$, we find that $\phi$ satisfies
\begin{equation*}
 W^{ij} \phi_{ij} = 0~\text{in} ~  \Omega,~\text{and }
\phi = 0~\text{on}~ \p\Omega.
\end{equation*}
 From $$W^{ij} w_{ij}= n\det D^2 w = n\lambda|w|^{n}>0 \text{ in }\Omega$$
  and the maximum principle, we have $|\phi| \leq \e (-w)$ in $\Omega$ for all $\e>0$.
  This implies  $\phi\equiv 0$. However, this contradicts the fact that
 $\|\phi\|_{L^{\infty}(\Omega)}=1.$
Hence $\tau>0$.
\vglue 0.2cm
\noindent
{\it Step 5: $\phi_k$ and $\varphi_k$ converge uniformly in $C^{0,1}(\overline{\Omega})$ to $|w|$ defined in (\ref{wdef}).}

As in {\it Step 3}, now with $0<\tau\leq 1$, we use $$\lim_{k\rightarrow \infty}  \frac{\|\tilde u_k-u_k\|_{L^{\infty}(\Omega)}}{\|\tilde v_k-v_k\|_{L^{\infty}(\Omega)}}=\frac{1}{\tau}$$ in (\ref{LMAvarphik}) to obtain
$$|\varphi_k| \leq C(n,\Omega)|\psi| \leq C(n,\Omega) \dist(x,\p\Omega).$$
Thus, up to extracting a subsequence, we can assume that $\{\phi_k\}$ and $\{\varphi_k\}$, respectively, converge locally uniformly in $C^{2,\beta}(\Omega)$ and uniformly in $C^{0,1}(\overline{\Omega})$ to Lipschitz functions $\phi\in C^{2,\beta}(\Omega)\cap C^{0, 1}(\overline{\Omega})$ 
and $\varphi\in C^{2,\beta}(\Omega)\cap C^{0, 1}(\overline{\Omega})$, respectively.
Using (\ref{Vkcon}) and (\ref{Vijcon}) together with $\sigma_k\rightarrow \lambda$ in the linearized Monge-Amp\`ere equations (\ref{LMAphik}) and (\ref{LMAvarphik}), we find that these functions $\phi$
and $\varphi$ satisfy
\begin{equation*}
\left\{
 \begin{alignedat}{2}
   W^{ij}\phi_{ij} + \lambda n |w|^{n-1} \tau\varphi~& = 0~&&\text{in} ~  \Omega, \\\
    W^{ij}\varphi_{ij} + \lambda n |w|^{n-1}\frac{\phi}{\tau}~& = 0~&&\text{in} ~  \Omega, \\\
\phi=\varphi &= 0~&&\text{on}~ \p\Omega.
 \end{alignedat} 
  \right.
  \end{equation*}
  Therefore,
  $$ W^{ij}(\phi-\tau\varphi)_{ij} - \lambda n |w|^{n-1}(\phi- \tau\varphi)=0~\text{in }\Omega,~\text{and }\phi-\tau\varphi=0~\text{on }\p\Omega.$$
  As in {\it Step 4}, we use the maximum principle to get $|\phi-\tau\varphi|<\e (-w)$ in $\Omega$ for all $\e>0$.
 It follows that $\phi=\tau\varphi.$ 
 Since $\|\phi\|_{L^{\infty}(\Omega)}=\|\varphi\|_{L^{\infty}(\Omega)}=1,$
 we have $\tau=1$; hence $\phi=\varphi$ and $\varphi$ satisfies
 $$ W^{ij}\phi_{ij} + \lambda n |w|^{n-1} \phi =0\text{ in} ~  \Omega.$$
 Using (\ref{wcc}) and {\it Step 3}, we have $M(-w)-\phi>0$ in $\Omega$ for a large constant  $M>0$. Now, $M(-w)-\phi$ and $-w$ are positive eigenfunctions corresponding to the eigenvalue $\lambda$ of the operator
 $-\frac{W^{ij}}{n|w|^{n-1}} \p_{ij}$ in $\Omega$. Note that $$\det \left(\frac{W^{ij}}{n|w|^{n-1}}\right)= \frac{(\det D^2 w)^{n-1}}{n^n |w|^{n(n-1)}}=\frac{\lambda^{n-1}}{n^n}.$$ It follows that $M(-w)-\phi= \theta (-w)$ for some positive constant $\theta$; see, for example \cite[Proposition A.2]{Ls}.
 Therefore, $\phi = \tau w$ for some constant $\tau$. From $\|\phi\|_{L^{\infty}(\Omega)}=\|w\|_{L^{\infty}(\Omega)}=1,$ we find 
 $$\phi=\varphi =\pm w.$$
 To show that $\phi=|w|$, it suffices to show that the limit function $\phi\geq 0$ at some interior point of $\Omega$. 
 
 Let $x_k\in\Omega$ be a minimum point of $u_k$. Then, from (\ref{vk1}) and Lemma \ref{suplem}, we have $|u_k(x_k)|= \|u_k\|_{L^{\infty}(\Omega)}\geq C^{-1}(n, p).$
 By the Aleksandrov maximum principle
 (see \cite[Theorem 2.8]{Fi} and \cite[Theorem 1.4.2]{G}) and the bound on $\sigma_k$ in Lemma \ref{suplem}, we have
 \begin{equation*}
 |u_k(x_k)|^n\leq C(n) (\diam \Omega)^{n-1} \dist(x_k,\p\Omega) 
 \int_{\Omega} \det D^2 u_k~dx\leq C(n, p, \Omega) \dist(x_k,\p\Omega).
\end{equation*}
This implies that
\begin{equation} 
\label{xksep}
\dist(x_k,\p\Omega) \geq C^{-1}(n,p,\Omega).
\end{equation}
 At $x_k$, by (\ref{pkn}), we have $$\tilde u_k(x_k)- u_k(x_k)=\|u_k\|_{L^{\infty}(\Omega)} + \tilde u_k(x_k)\geq \|u_k\|_{L^{\infty}(\Omega)}-\|\tilde u_k\|_{L^{\infty}(\Omega)}\geq 0$$
 and thus
 $$\phi_k(x_k)\geq 0.$$
 This together with (\ref{xksep}) shows that $\phi(z)\geq 0$ where $z\in\Omega$ is a limit point of $\{x_k\}$. 
 In conclusion, 
 $$\phi=\varphi=-w =|w|.$$
 \vglue 0.2cm
\noindent
 {\it Step 6: $\phi_k>0$ and $\varphi_k>0$ when $k$ is large enough.}

We are going to show if $k$ and $M$ are large, and $\delta>0$ small, then 
$$\eta:=M\delta^{n} \psi -\delta w$$
is a lower barrier for $\phi_k$ in the boundary ring
  $$\Omega_\delta:=\{x\in\Omega|\dist(x,\p\Omega)<\delta\}.$$
  Let
  $c_3:=c(\Omega)/2$
  where $c(\Omega)$ is as in (\ref{wcc}). Then, by {\it Step 5} and (\ref{wcc}), for any fixed $\delta>0$, we can find a large positive integer $k_0=k_0(\delta,\Omega)$ such that 
 \begin{equation}
 \label{phikout}
 \phi_k\geq c_3\delta~\text{in } \Omega\backslash\Omega_\delta~\text{for all } k\geq k_0.
 \end{equation}
 In view of (\ref{Vijcon}), we have the following uniform convergence in $C(\overline{\Omega})$
 $$U^{ij}_k w_{ij}\rightarrow W^{ij} w_{ij}= n\det D^2 w = n\lambda |w|^n \leq C_1(n,\Omega) \dist^n(x,\p\Omega),$$
 which implies that
 $$U^{ij}_k w_{ij}\leq C_1 \dist^n(x,\p\Omega) + \e_k~\text{in }\Omega$$
 where $\e_k\rightarrow 0$ when $k\rightarrow \infty$. 
 
 Therefore, using {\it Step 3} together with (\ref{Upsi}) and (\ref{Vkdist}),
 we have in $\Omega_\delta$
 \begin{eqnarray}
 \label{phiketa}
 U_k^{ij} (\phi_k-\eta)_{ij} &=& U^{ij}_k \phi_{k, ij} -M\delta^{n} U^{ij}_k \psi_{ij} +\delta U^{ij}_k w_{ij}\nonumber\\
 &\leq& 4\lambda \tau |V_k \phi_k| - M\delta^{n} c_2 + \delta C_1 \dist^n(x,\p\Omega) + \delta\e_k\nonumber\\
 &\leq& C_2(n,\Omega)\dist^{n} (x,\p\Omega) - M\delta^{n} c_2 + \delta C_1 \dist^n(x,\p\Omega) + \delta \e_k<0
 \end{eqnarray} 
 provided that $M$ is large (depending only on $n$ and $\Omega$) and $k\geq k_1(\delta,n,\Omega)$ where $k_1$ is large.

On the other hand, for $k\geq  k_1$, using (\ref{phikout}) together with (\ref{wcc}) and (\ref{psi_est}), we have, on $\p\Omega_\delta\backslash\p\Omega$
$$\phi_k-\eta=\phi_k+ M\delta^n |\psi|-\delta |w| \geq c_3 \delta + c_0 M\delta^{n+ 1} -c^{-1} \delta^2>0$$
provided $\delta\leq \delta_0$ where $\delta_0=\delta_0(n,\Omega)>0$ is small.  

Now, it follows from (\ref{phiketa}) and the maximum principle that, for all  $k\geq k_2(\delta, n,\Omega):=\max\{k_0, k_1\}$ and $\delta\leq \delta_0$,
$$\phi_k-\eta\geq 0 ~\text{in }\Omega_{\delta}.$$
Consequently, using (\ref{wcc}) and (\ref{psi_est}) once more time,  we have for all $k\geq k_2$
$$\phi_k \geq \eta=-M\delta^{n}|\psi| +\delta |w| \geq- c_0^{-1} M\delta^{n}\dist(x,\p\Omega) + \delta c \dist(x,\p\Omega) \geq  \frac{c\delta}{2} \dist(x,\p\Omega) ~\text{in }\Omega_{\delta}$$
provided $\delta\leq \delta_1(n,\Omega)$ small. This combined with (\ref{phikout}) shows that $\phi_k>0$ in $\Omega$ for $k$ large enough.

The same argument shows that $\varphi_k>0$ in $\Omega$ for $k$ large enough. This completes the proof of our theorem.
\end{proof} 
\noindent
{\bf Acknowledgements.} The author would like to thank the referees for their helpful comments.

\end{document}